\documentclass[12pt,oneside]{amsart}
\usepackage{amsmath,amssymb,amsthm,mathtools,mathrsfs}
\usepackage[a4paper,width=16.2cm,top=2.5cm,bottom=2cm,
footskip=1cm]{geometry}
\usepackage{microtype}
\usepackage{needspace}

\usepackage[dvipsnames]{xcolor}

\newcommand{\handrev}[1]{#1}

\newcommand{\negrev}[1]{#1}

\newcommand{\emptyrev}[1]{#1}

\newcommand{\noterev}[1]{#1}
\newcommand{\structrev}[1]{#1}

\usepackage[colorlinks=true,linkcolor=Magenta,
citecolor=MidnightBlue,
urlcolor=MidnightBlue
]{hyperref}

\theoremstyle{plain}
\newtheorem{theorem}{Theorem}[section]
\newtheorem{assumption}[theorem]{Assumption}
\newtheorem{proposition}[theorem]{Proposition}
\newtheorem{lemma}[theorem]{Lemma}
\newtheorem{corollary}[theorem]{Corollary}
\theoremstyle{definition}

\theoremstyle{remark}
\newtheorem{remark}[theorem]{Remark}
\numberwithin{equation}{section}
\newcommand{\Hb}{H_b^0}
\newcommand{\dbarb}{\bar\partial_b}
\newcommand{\dbarG}{\bar\partial_{b,X_G}}
\newcommand{\A}{\mathcal A}
\newcommand{\B}{\mathcal B}
\newcommand{\C}{\mathcal C}
\newcommand{\I}{\mathcal I}
\newcommand{\OO}{\mathcal O}
\newcommand{\RR}{\mathbb R}
\newcommand{\CC}{\mathbb C}
\newcommand{\NN}{\mathbb N}
\newcommand{\Tt}{\mathbb T}
\DeclareMathOperator{\Dom}{Dom}
\DeclareMathOperator{\Ker}{Ker}
\DeclareMathOperator{\Ran}{Im}
\DeclareMathOperator{\coker}{coker}

\DeclareMathOperator{\Int}{Int}

\DeclareMathOperator{\supp}{supp}
\DeclareMathOperator{\Ann}{Ann}
\newcommand{\norm}[1]{\lVert#1\rVert}

\title[Quantization commutes with reduction for embeddable CR reductions]
{Quantization Commutes with Reduction 
for Embeddable CR Reductions}
\author[C.-Y. Hsiao]{Chin-Yu Hsiao}
\address{Department of Mathematics, National Taiwan University, Taipei, Taiwan}
\email{chinyuhsiao@ntu.edu.tw, chinyu.hsiao@gmail.com}
\author[R.-T. Huang]{Rung-Tzung Huang}
\address{Department of Mathematics, National Central University,
Zhongli District, \newline\mbox{\quad}\,Taoyuan City 320317, Taiwan}
\email{rthuang@math.ncu.edu.tw}
\author[X. Ma]{Xiaonan Ma}
\address{Chern Institute of Mathematics and LPMC, Nankai University,
Tianjin 300071,
\newline\mbox{\quad}\,P.\ R.\ China}
\email{xiaonan.ma@nankai.edu.cn}
\author[G. Marinescu]{George Marinescu}
\address{Department Mathematik/Informatik, Universit\"at zu K\"oln,
Weyertal 86-90, 50931 K\"oln, 
\newline\mbox{\quad}\,Germany 
\newline\mbox{\quad}\,Institute of Mathematics `Simion Stoilow', 
Romanian Academy, Bucharest, Romania}
\email{gmarines@math.uni-koeln.de}

\subjclass[2020]{Primary 53D50, 32V20; Secondary 32W10, 32L10, 35S30}
\keywords{CR reduction, Guillemin--Sternberg map, Szeg\H{o} kernel,
Sobolev spaces, conductor ideal, CR peak functions, geometric quantization}

\begin{document}
\begin{abstract}
We prove a canonical quantization-commutes-with-reduction theorem for CR
functions. Under the geometric hypotheses of Hsiao--Ma--Marinescu, if
the zero moment level is nonempty and the strictly pseudoconvex
reduction is globally CR embeddable of real dimension at least three,
the CR Guillemin--Sternberg map is a bounded isomorphism at every real
Sobolev order. The central new step is an algebraic and complex-analytic
elimination of its finite-dimensional Fredholm defect: multiplicativity
removes the kernel, while the conductor of the smooth range, integral
dependence, and holomorphic removability produce global lifts.
Conductor nonvanishing follows from the reduced Szeg\H{o} singularity;
we also prove it using global CR peak functions. 
The map is
an isomorphism of Fr\'echet algebras, the reduction is connected, and
the corresponding maps for holomorphic line bundles positive near the
zero level are isomorphisms in every nonnegative tensor degree.
Invariant holomorphic sections vanish in every negative degree;
when the reduced base has positive complex dimension, the section maps
are isomorphisms in every integer degree.

\end{abstract}
\maketitle
\tableofcontents

\section{Introduction and statement of the main results}\label{sec:introduction}

\handrev{We prove that the canonical Cauchy--Riemann (CR) 
Guillemin--Sternberg map is an
isomorphism under the natural hypotheses below.  Let $X$ be a compact
connected oriented CR
manifold with a compact group $G$-action, let $Y$ be the zero level of its
moment map, and set $X_G=Y/G$.  Assume the geometric hypotheses of
\cite{HMM}, that $Y$ is nonempty, that $\dim_{\RR}X_G\ge3$, and that
the tangential Cauchy--Riemann operator on $X_G$ has closed range.
Then restriction to $Y$ followed by descent to $X_G$ is a bounded
isomorphism from $G$-invariant CR functions on $X$ to CR functions on
$X_G$ at every real Sobolev order, with the shift $-\dim G/4$ found in
\cite[Theorem~1.2]{HMM}.  In particular, every smooth CR function on
$X_G$ has a unique smooth invariant CR lift to $X$, and $X_G$ is
connected.  We also prove that canonical restriction and descent give
isomorphisms for holomorphic sections of every nonnegative power of
an equivariant line bundle positive near the zero level.
\negrev{In negative degrees the invariant section spaces vanish, and
the canonical maps are again isomorphisms when the reduced base has
positive complex dimension.}}

{The central novelty is a mechanism that turns the Fredholm result of
\cite[Theorem~1.2]{HMM} into a canonical lifting theorem. On smooth
scalar CR functions, multiplicativity and a powers argument eliminate
the kernel. The image is a finite-codimensional subalgebra of the CR
functions on the reduction. Its conductor has no common zero, which
makes it possible to express a reduced CR function locally as a quotient
of functions that already lift. Finite codimension supplies a monic
relation that bounds these quotients near their apparent poles; on local
complex quotients, holomorphic removability then extends and glues them
to a global invariant CR lift. Thus the proof joins the algebra of CR
functions to complex-analytic extension, beyond the microlocal
Fredholm input. We also provide a proof using global CR peak functions, 
constructed with the
amended Harvey--Lawson filling theorem, the Hill--Nacinovich collar
extension, and Star\v{c}i\v{c}'s Stein-neighborhood theorem. 
}

The geometric quantization problem of Guillemin--Sternberg asks how the
quantum space of a Hamiltonian manifold compares with the quantum space
of its symplectic reduction.  For compact K\"ahler manifolds with a
positive equivariant holomorphic line bundle, their theorem identifies
the invariant holomorphic sections with sections of the reduced bundle
\cite{GS82a}.  In the symplectic setting the formulation uses equivariant
Spin$^c$ indices.  
The index formulation was proved for torus actions independently by
Meinrenken and Vergne
\cite{Meinrenken96,Vergne96}.  Meinrenken treated general compact
connected groups \cite{Meinrenken98}, and Meinrenken--Sjamaar extended
the result to singular quotients \cite{MeinrenkenSjamaar99}.
Tian--Zhang gave an analytic proof with further generalizations
\cite{TZ98}; Paradan gave a $K$-theoretic proof using transversally
elliptic symbols \cite{Paradan01}.  For compact K\"ahler manifolds,
\handrev{Teleman, Braverman, and Zhang extended the comparison to 
higher Dolbeault cohomology \cite{Teleman00,Braverman01,Zhang99}.}
For noncompact symplectic manifolds with proper moment map and a compact
group action, \handrev{Ma--Zhang \cite{MaZhang09,MZ14} settled Vergne's
conjecture, formulated in her ICM plenary lecture \cite[\S~4.3]{VergneICM06};
see also Ma's survey \cite{MaICM10}.  Paradan gave a new proof
using symplectic cuts and the wonderful compactification
\cite{Paradan11}, and Hochs--Song \cite{HochsSong17} established the
analogue of the Ma--Zhang result \cite{MaZhang09,MZ14} in the
Spin$^c$ setting of Paradan--Vergne \cite{ParadanVergne17}.}
\handrev{Ma--Zhang studied the asymptotics of the $G$-invariant Bergman
kernel and its relation to symplectic reduction \cite{MZ08};
see also the discussion in \cite{MaICM10}.}

Hsiao--Ma--Marinescu \cite{HMM} formulate the problem on a compact CR
manifold with a compact group action.  One motivation is the unit circle
bundle $X\subset L^*$ of a Hermitian holomorphic line bundle.  Its CR
functions of circle weight $m$ represent $H^0(M,L^m)$, and the Szeg\H{o}
projector organizes the Fourier components of the Bergman projections.
This viewpoint connects Toeplitz quantization \cite{BG}, the
Boutet de Monvel--Sj\"ostrand parametrix \cite{BMS}, equivariant Bergman
asymptotics \cite{MaMarinescu,MZ08}, and the $G$-invariant 
Szeg\H{o} and reduction results of Hsiao--Huang \cite{HH}.

The principal analytic result of \handrev{\cite[Theorem~1.2]{HMM}}
is the Fredholmness of the canonical restriction-and-descent map, 
with a Sobolev shift $d/4$, where
$d=\dim G$.  Its finite-dimensional kernel and Sobolev-orthogonal
cokernel consist of smooth functions.  The kernel and the Fredholm index
are independent of the Sobolev order.  These are substantial consequences
of the equivariant microlocal analysis in that paper and are used as
inputs below.

In Section~\ref{sec:proof}, we exploit the fact that the
\emph{canonical} map preserves products of smooth scalar CR functions.
The finite-dimensionality of
its kernel, together with powers of a function in that kernel, gives
injectivity.  Let $\B$ denote the algebra of smooth CR functions on
$X_G$, and let $\A\subset\B$ be the image of the smooth invariant CR
functions on $X$.
\handrev{The Fredholm and regularity results of \cite{HMM} imply that
$\B/\A$ is finite-dimensional.}  Surjectivity is proved using
the \emph{conductor} of this inclusion,
$\I=\{c\in\B:c\B\subset\A\}$.
\handrev{Since $1\in\B$, taking $b=1$ gives $c=c\cdot1\in\A$
for every $c\in\I$.}  Thus, for any $c\in\I$ and
$b\in\B$, both $c$ and $cb$ already lift to smooth invariant CR
functions on $X$.  At every point of $X_G$ some conductor
element is nonzero.  One proof uses the singularity of the reduced
Szeg\H{o} kernel; the other uses global CR peak functions and requires
the additional filling-theoretic input stated below.

The nonvanishing result provides finitely many conductor elements $c_j$
with no common zero on $X_G$.  For arbitrary $b\in\B$, both $c_j$ and
$c_jb$ have smooth invariant CR lifts $C_j$ and $B_j$ on $X$.
The quotients $B_j/C_j$ agree wherever they are defined and give the
desired lift near $Y$.  Their possible common-zero set is disjoint
from $Y$ and has empty interior.  Since $\B/\A$ is finite-dimensional,
$b$ satisfies a monic polynomial equation with coefficients in $\A$.
Lifting this equation after clearing denominators shows that the
quotients $B_j/C_j$ are locally bounded near the common-zero set.
There, local BRT coordinates for a transverse group flow identify
invariant CR functions with holomorphic functions.  Bounded
holomorphic removability extends the quotients across their apparent
poles; their agreement on the dense complement then glues the local
extensions to a smooth invariant CR function on $X$.

We now specify the geometric hypotheses of the main theorem.  
Let $(X,T^{1,0}X)$ be a compact, connected, orientable smooth CR manifold
of real dimension $2n+1$, $n\ge1$.  Write $HX$ for its real Levi
distribution and $J$ for its complex structure.  Fix a nonvanishing real
characteristic form $\omega_0$ with $\ker\omega_0=HX$.
\handrev{We use $\ker$ for pointwise kernels of bundle maps and
$\Ker$ for kernels of operators on function spaces.}
The Levi conventions are those of \cite[(2.6)--(2.7)]{HMM}:
\begin{equation}\label{eq:levi-convention}
 \mathcal L_x(v,w)=\frac12d\omega_0(v,Jw),\qquad
 \mathcal L_x(U,V)=\frac1{2i}d\omega_0(U,\overline V),
\end{equation}
for $v,w\in H_xX$ and $U,V\in T^{1,0}_xX$, respectively.

Let $G$ be a compact Lie group, not necessarily connected, with Lie
algebra $\mathfrak g$ and real dimension $d$.  In accordance with
\handrev{\cite[\S~2.3]{HMM}}, the infinitesimal vector field 
and moment map are
\begin{equation}\label{eq:moment-convention}
 \xi_X(x)=\left.\frac{d}{dt}\right|_{t=0}\exp(-t\xi)\cdot x,
 \qquad
 \langle\mu(x),\xi\rangle=\omega_0(\xi_X(x)),\quad \text{ for }
 \handrev{\xi\in\mathfrak g.}
\end{equation}
Throughout, write
\begin{equation}\label{eq:Y-XG}
 Y=\mu^{-1}(0),\qquad X_G=Y/G,
 \qquad \iota:Y\hookrightarrow X,\quad \pi:Y\longrightarrow X_G.
\end{equation}
\begin{assumption}\label{ass:HMM}
The $G$-action preserves $J$ and $\omega_0$ and is free on $Y$.  
One of the following two conditions holds:
\begin{enumerate}
\item[(i)] $\dim_{\RR}X\ge5$, and the Levi form is positive 
definite on an open neighborhood of $Y$;
\item[(ii)] $\dim_{\RR}X=3$, the Levi form is positive definite 
on all of $X$, and $\dbarb$ on functions has closed range in $L^2(X)$.
\end{enumerate}
\end{assumption}
This is the geometric assumption in \cite[Assumption~1.1]{HMM}.
In alternative~(i), Levi positivity is required only near $Y$; 
no global CR embeddability condition is imposed on $X$.
Nonemptiness of $Y$ and closed range on the \emph{reduction} 
are imposed separately in the main theorem.  
If $Y\ne\varnothing$, freeness and Levi
positivity imply that $0$ is a regular value and that
\begin{equation}\label{eq:dimension}
 \dim_{\RR}X_G=2n+1-2d=2(n-d)+1.
\end{equation}
In the three-dimensional alternative~(ii), the requirement
$\dim_{\RR}X_G\ge3$ in Theorem~\ref{thm:main} forces $d=0$.
Thus $G$ is finite, $Y=X$, and $X\to X_G=X/G$ is a finite CR covering.
The asserted isomorphism is then simply the elementary descent of
invariant CR functions through this covering, valid at every Sobolev
order with shift zero.
\handrev{We retain alternative~(ii) to match the hypotheses of
\cite{HMM}.}  The nontrivial reduction case is alternative~(i) with
$d>0$; see Remark~\ref{rem:finite}.
When this dimension is at least three, $X_G$ is a compact strictly
pseudoconvex CR manifold with induced tensors $\omega_{0,G}$
 and $J_G$; see \cite[Lemma~2.5 and Theorem~2.6]{HMM} 
 and Section~\ref{sec:prelim}.

 Restriction to the moment-map level and descent first define 
 a map on smooth invariant CR functions.  
 \handrev{To state the isomorphism at every real Sobolev order,}
 we place this map between distributional CR spaces and use 
  the Fredholm extension from \cite{HMM}.
The ordinary Sobolev spaces $H^s(X)$ and
$H^s(X_G)$, for every real $s$, are defined in
Section~\ref{subsec:sobolev}.  They include distributions when $s<0$.
Following \cite[(1.2)--(1.4)]{HMM}, put
\begin{align}
 \Hb(X)_s&=\{u\in H^s(X):\dbarb u=0\text{ distributionally}\},\notag\\
 \Hb(X)^G_s&=\{u\in\Hb(X)_s:g^*u=u\text{ for every }g\in G\},
       \label{eq:hardy-sobolev}\\
 \Hb(X_G)_s&=\{v\in H^s(X_G):\dbarG v=0\text{ distributionally}\}.
       \notag
\end{align}
Write $\Hb(X)=\Hb(X)_0$ and $\Hb(X_G)=\Hb(X_G)_0$.
On smooth invariant CR functions define
\begin{equation}\label{eq:canonical-map}
 \sigma_G=\iota_G\circ\iota^*,\qquad
 (\sigma_Gu)(\pi(y))=u(y),\quad y\in Y,
\end{equation}
where $\iota_G:\handrev{\mathscr C}^\infty(Y)^G\to 
\handrev{\mathscr C}^\infty(X_G)$ is descent.
For low Sobolev orders, the extension below is defined by density.

\begin{theorem}[\handrev{The canonical CR map is 
      an isomorphism}]\label{thm:main}
Let $X$ be compact, connected, and orientable, and assume
Assumption~\ref{ass:HMM}.  Suppose in addition that
\begin{equation}\label{eq:main-extra}
 \begin{gathered}
 Y\ne\varnothing,\qquad \dim_{\RR}X_G=2(n-d)+1\ge3,\\
 \dbarG\text{ has closed range in }L^2(X_G).
 \end{gathered}
\end{equation}
Then for every $s\in\RR$, the canonical map \eqref{eq:canonical-map}
has a unique continuous extension, which is a bounded isomorphism
\begin{equation}\label{eq:main-isomorphism}
 \sigma_{G,s}:\Hb(X)^G_s\overset{\sim}{\longrightarrow}
                  \Hb(X_G)_{s-d/4}.
\end{equation}
For each $s$, there is a constant $C_s\ge1$ such that
\begin{equation}\label{eq:main-estimate}
 C_s^{-1}\norm{u}_{s}\le
 \norm{\sigma_{G,s}u}_{X_G,s-d/4}
 \le C_s\norm{u}_{s},\qquad u\in\Hb(X)^G_s.
\end{equation}
On smooth functions, $\sigma_G$ is a unital algebra isomorphism 
and a Fr\'echet-space isomorphism.  Moreover, $X_G$ is connected.
\end{theorem}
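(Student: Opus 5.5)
The plan is to take the Fredholm theorem of \cite[Theorem~1.2]{HMM} as the analytic input and reduce everything to injectivity and surjectivity of $\sigma_G$ on smooth functions. That theorem, under Assumption~\ref{ass:HMM} and \eqref{eq:main-extra}, supplies for each $s$ a bounded Fredholm extension $\sigma_{G,s}:\Hb(X)^G_s\to\Hb(X_G)_{s-d/4}$. Its kernel is finite-dimensional, consists of smooth functions, and does not depend on $s$; its Sobolev-orthogonal cokernel is also smooth. Uniqueness of the extension follows from density of smooth invariant CR functions, used in \cite{HMM} to define it. If we show that $\sigma_G$ is bijective on smooth invariant CR functions, then $\Ker\sigma_{G,s}=0$ for all $s$. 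The range is closed and contains the dense smooth CR functions on $X_G$, because the smooth cokernel representatives would be orthogonal to all of $\B$. Hence $\sigma_{G,s}$ is onto, and the open mapping theorem gives \eqref{eq:main-estimate}. On smooth functions, the estimates for all $s$ give the Fr\'echet isomorphism, and multiplicativity is obvious since $\sigma_G$ is restriction followed by descent.

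\emph{Injectivity.} Suppose $u\neq0$ is a smooth invariant CR function with $\sigma_Gu=0$. Then every power $u^k$ is also a smooth invariant CR function, and $\sigma_G(u^k)=(\sigma_Gu)^k=0$, so all $u^k$ lie in the finite-dimensional kernel. Hence $u$ satisfies a polynomial relation $\sum_{k\le N}a_ku^k=0$. The image of $u$ is connected since $X$ is connected, so $u$ must be constant. Because $u$ vanishes on $Y\neq\varnothing$, that constant is $0$.

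\emph{Surjectivity.} Let $\B$ be the smooth CR functions on $X_G$ and $\A=\sigma_G(\text{smooth invariant CR})$. By the smoothness of the cokernel, $\dim\B/\A<\infty$. Form the conductor $\I=\{c\in\B: c\B\subset\A\}$.

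First, show that $\I$ has no common zero on $X_G$. Finite codimension of $\A$ means $\I$ is the kernel of the finitely many conditions $c\mapsto(c b_i \bmod\A)$ with $b_i$ spanning $\B/\A$. I would use the reduced Szeg\H{o} kernel $S_{X_G}(\cdot,p)$, which is singular at the diagonal. Suppose every $c\in\I$ vanished at $p$. Then, using products of finitely many CR functions peaking near $p$, obtained from Boutet de Monvel--Sj\"ostrand type phases, one can produce $c\in\I$ with $c(p)\neq0$, which is a contradiction. Compactness then gives finitely many $c_j\in\I$ without common zero.

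Second, for $b\in\B$ lift $c_j$ and $c_jb$ to $C_j$ and $B_j$. Injectivity gives $B_jC_k=B_kC_j$. So $F=B_j/C_j$ is a well-defined invariant CR function off $Z=\bigcap_j\{C_j=0\}$, which is disjoint from $Y$ and, by unique continuation, has empty interior. The powers $1,b,\dots,b^m$ are dependent modulo $\A$ once $m=\dim\B/\A$, so $b$ satisfies a monic relation $b^m+a_{m-1}b^{m-1}+\dots+a_0=0$ with $a_i\in\A$. Lifting the $a_i$ to $A_i$ and multiplying by $c_j^m$, injectivity shows $F$ satisfies $F^m+A_{m-1}F^{m-1}+\dots+A_0=0$ off $Z$. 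Therefore $F$ is locally bounded near $Z$. Near a point of $Z$, where the $G$-orbits are possibly not free, I would pass to BRT-type coordinates for a transversal flow in the Levi-positive region. There, or via local embedding where Levi positivity fails, invariant CR functions become holomorphic functions on a local complex quotient. Riemann removability of bounded functions then extends $F$. Density of the complement of $Z$ glues these local extensions into a smooth invariant CR lift of $b$.

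Connectedness of $X_G$ follows at once: a locally constant function on $X_G$ is CR, so by surjectivity it lifts to a CR function on connected $X$. By injectivity, that lift is a constant whose restriction is the given function, so the function itself is constant.

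The main obstacle is the conductor nonvanishing together with the local removability near $Z$ away from the Levi-positive region. My first attempt would be to keep $Z$ inside the strictly pseudoconvex neighborhood of $Y$ by choosing the $c_j$ suitably. Failing that, I would invoke embeddability of $X_G$ together with extension arguments.
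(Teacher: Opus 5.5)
Your architecture matches the paper's: injectivity by powers, finite codimension of $\A$ in $\B$, the conductor, lifts $C_j,B_j$ with $C_iB_j=C_jB_i$, a monic relation for local bounds, removability and gluing. Your completion step is also fine: at each order a smooth cokernel element lies in $\B\subset\Ran\sigma_{G,s}$, so it is orthogonal to itself. However, the two steps that carry the new content are not established.

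\emph{Conductor nonvanishing.} The sentence ``products of peaking functions produce $c\in\I$ with $c(p)\neq0$'' gives no mechanism, and finite codimension of a unital subalgebra cannot supply one. Take $v\in\CC T_pX_G$ with $\partial_v\beta(p)\neq0$ for some $\beta\in\B$. Then $\{b\in\B:\partial_vb(p)=0\}$ is a unital subalgebra of codimension one. The identity $\partial_v(cb)(p)=\partial_vc(p)\,b(p)+c(p)\,\partial_vb(p)$ shows that its conductor vanishes at $p$.

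What you are missing is that the conditions cutting out $\A$, and hence $\I$, are integrations against smooth functions. Let $F$ be the smooth $L^2$-orthogonal complement of $\Ran\sigma_{G,d/4}$ in $\Hb(X_G)$, with orthonormal basis $e_1,\dots,e_r$. Then $\A=\B\cap F^\perp$, so $c\in\I$ iff $(c,\overline{b_j}e_i)_{X_G}=0$ for all $i,j$, where $b_0=1$ and $b_j=e_j$. A common zero $p$ would make evaluation at $p$ factor through this finite-rank map, so $b(p)=(b,h)_{X_G}$ on $\B$ for some smooth $h$. This $L^2$-boundedness of evaluation is what gets contradicted. One way is the Szeg\H{o} kernel: taking $b=S_{X_G}\varphi$ gives $S_{X_G}(p,y)=\overline{(S_{X_G}h)(y)}$, bounded as $y\to p$, whereas the Boutet de Monvel--Sj\"ostrand parametrix blows up like $\varepsilon^{-2q-2}$ along a curve in $\{\operatorname{Re}\phi(p,\cdot)=0\}$. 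The other way is powers of a \emph{global} CR peak function; producing one needs a real argument, not just local phases.

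Relatedly, the identity $\A=\B\cap\Ran\sigma_{G,d/4}$ (a smooth element of the range has a smooth preimage) does not follow from smoothness of the cokernel alone. The paper derives it from $\Hb(X_G)_k=\Ran\sigma_{G,d/4+k}\oplus F$ for every $k$. That decomposition uses the $s$-independence of the cokernel dimension, together with injectivity and compatibility.

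\emph{Local complex quotient near $Z$.} You restrict BRT coordinates to the Levi-positive region, and otherwise hope to choose the $c_j$ so that $Z$ lies there, or to use local embeddings. None of this is justified. Nothing controls where the $C_j$ vanish, Assumption~\ref{ass:HMM}(i) gives no Levi information away from $Y$, and local embeddability fails in general. Your appeal to unique continuation for $\Int Z=\varnothing$ has the same defect, since general CR functions, e.g.\ on Levi-flat pieces, need not have it.

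The missing observation is that BRT coordinates need neither Levi positivity nor freeness. They need only a vector field transverse to $HX$ whose flow is CR, and the moment map supplies one at every point off $Y$. If $x_0\notin Y$, pick $\xi$ with $\omega_0(\xi_X)(x_0)=\langle\mu(x_0),\xi\rangle\neq0$. Invariant CR functions are annihilated by $\xi_X$, so all of $C_j,B_j,A_k$ descend to holomorphic functions on one connected domain $V\subset\CC^n$. This yields $\Int Z=\varnothing$: the identity theorem makes $\Int Z$ closed, and $Z\neq X$ because $Z\cap Y=\varnothing$ and $Y\neq\varnothing$. It also yields Riemann removability of the bounded quotient, after which your gluing goes through. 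A minor point: since $[1]=0$ in $\B/\A$, use $[b],\dots,[b^{m+1}]$ to obtain the monic relation.
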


\handrev{Note that the map \eqref{eq:canonical-map} does not depend on
the metrics on $TX$ and $TX_G$, and the Sobolev norms for different
smooth metrics are equivalent.  Thus we may work with the metrics and
volume forms fixed as in \cite[Convention~2.8]{HMM}; see
Section~\ref{subsec:metrics}.\par}

For a compact strictly pseudoconvex manifold of real dimension 
at least three, closed range on functions is equivalent to global CR
embeddability, see \noterev{\cite{BdM75,BMS,Bur79,HL75,Kohn86}
and the attribution of the two implications in 
Section~\ref{subsec:fredholm}}; this is the equivalence recalled in
\handrev{\cite[\S~2.4]{HMM}}. Consequently, \eqref{eq:main-extra}
can be formulated with embeddability of $X_G$ in place of 
closed range. It is automatic in reduced real dimension at least
five by the embedding theorem of Boutet de Monvel \cite{BdM75}, 
whereas in reduced dimension three, it
is a substantive condition.  Although $X$ is assumed connected,
 neither Assumption~\ref{ass:HMM} nor \eqref{eq:main-extra}
requires $X_G$ to be connected; its connectedness is 
a conclusion of Theorem~\ref{thm:main}.

\structrev{This connectedness conclusion is a CR analogue of the
connectedness theorems for moment-map fibres in compact Hamiltonian
geometry, due to Atiyah, Guillemin--Sternberg, and Kirwan
\cite{Atiyah82,GS82convexity,Kirwan84}.
Here it follows from the algebra isomorphism on smooth CR functions
and the absence of nonconstant idempotents on the connected manifold
$X$.  When $G$ is connected, the principal bundle $Y\to X_G$ also
implies that the zero level $Y=\mu^{-1}(0)$ is connected.}

\emptyrev{When $Y=\varnothing$, every $G$-invariant CR distribution
$u$ is smooth by the local complex-quotient argument of
Lemma~\ref{lem:quotient} and Remark~\ref{rem:invariant-away}.
Choose $x_0$ where $|u|$ attains its maximum, and put $c=u(x_0)$.
At every point of the nonempty closed set $E=\{x\in X:u(x)=c\}$,
the local holomorphic representative attains that maximum; the
maximum-modulus principle makes it constant, so $E$ is also open.
Connectedness of $X$ gives $u\equiv c$.  Consequently,
\[
 \Hb(X)^G_s=\CC\qquad\text{for every }s\in\RR.
\]
Thus the nonempty-level hypothesis isolates the genuinely 
reduced case.}

The first application concerns Sasakian Reeb weights. The 
canonical CR isomorphism removes the finite exceptional set left in
\cite[Theorem~1.3]{HMM}.
\handrev{In the setting of the following corollary, let $\Tt$ be the
closure of the Reeb flow in the isometry group of the Sasakian metric.
This compact torus preserves $Y$ and induces a CR action on $X_G$.
Write $\widehat{\Tt}=\operatorname{Hom}_{\mathrm{cont}}(\Tt,S^1)$ 
for its character group.  
For $Z=X,X_G$ and $\alpha\in\widehat{\Tt}$, define
\[
 H^0_{b,\alpha}(Z)
 =\{u\in L^2(Z):\bar\partial_{b,Z}u=0,\quad
          t^*u=\alpha(t)u\text{ for every }t\in\Tt\},
\]
where the CR equation is understood distributionally and
$(t^*u)(z)=u(t^{-1}\cdot z)$.
These spaces are finite-dimensional and
consist of smooth functions; equivalently, one may replace $L^2(Z)$
by $\mathscr C^\infty(Z)$ in the definition.
The superscript $G$ denotes the subspace of $G$-invariant functions.}

\begin{corollary}[\handrev{Canonical CR isomorphism at 
      every Reeb weight}]\label{cor:sasaki}
Let $X$ be a compact connected Sasakian manifold, and 
let $G$ preserve its CR
\noterev{structure and contact form $\omega_0$, and act freely on
$Y\ne\varnothing$.}  Suppose $\dim_{\RR}X_G\ge3$.
\handrev{For every $\alpha\in\widehat{\Tt}$, 
the canonical restriction gives}
\begin{equation}\label{eq:weight-map}
 \sigma_G:H^0_{b,\alpha}(X)^G\overset{\sim}{\longrightarrow}
                         H^0_{b,\alpha}(X_G).
\end{equation}
\end{corollary}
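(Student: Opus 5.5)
The plan is to deduce Corollary~\ref{cor:sasaki} from Theorem~\ref{thm:main} at $s=d/4$ (or, more simply, on smooth functions) by showing that $\sigma_G$ intertwines the two $\Tt$-actions, so that it maps each isotypic component onto the corresponding one.

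\textbf{Step 1: the hypotheses of Theorem~\ref{thm:main} hold.} A compact Sasakian manifold is strictly pseudoconvex everywhere, so in real dimension at least five alternative~(i) of Assumption~\ref{ass:HMM} holds. In real dimension three, $\dim_{\RR}X_G\ge3$ forces $d=0$, and the finite-covering case of Remark~\ref{rem:finite} applies directly. Closed range of $\dbarb$ on $X$ and of $\dbarG$ on $X_G$ comes from compact Sasakian manifolds being CR embeddable. Indeed, the reduced structure on $X_G$ is again Sasakian, since $\Tt$ descends to a transversal CR torus action. In reduced dimension at least five this is also automatic by Boutet de Monvel \cite{BdM75}. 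In dimension three one uses the standard embeddability of Sasakian (transversally-acted) CR manifolds via Fourier decomposition along a transversal CR torus action, together with the equivalence recalled in Section~\ref{subsec:fredholm}. I expect this verification for $X_G$ in reduced dimension three to be the main point requiring care.

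\textbf{Step 2: the $\Tt$-actions are compatible.} The Reeb field is $G$-invariant because $G$ preserves $\omega_0$ and $J$. Hence its closure $\Tt$ commutes with $G$, preserves $\mu$ (as $\omega_0$ is $\Tt$-invariant), and therefore preserves $Y$ and descends to $X_G$. For smooth invariant $u$ and $y\in Y$,
\[
(t^*\sigma_Gu)(\pi(y))=(\sigma_Gu)(\pi(t^{-1}y))=u(t^{-1}y)=(\sigma_G t^*u)(\pi(y)),
\]
so $\sigma_G t^*=t^*\sigma_G$. By density and continuity the same identity holds for $\sigma_{G,s}$.

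\textbf{Step 3: restriction to weight spaces.} Weight spaces consist of smooth functions. Step~2 shows that $\sigma_G$ maps $H^0_{b,\alpha}(X)^G$ into $H^0_{b,\alpha}(X_G)$, and it is injective there by Theorem~\ref{thm:main}. For surjectivity, take $v\in H^0_{b,\alpha}(X_G)$. It is smooth, so Theorem~\ref{thm:main} gives a unique smooth invariant CR $u$ with $\sigma_Gu=v$. For $t\in\Tt$, the function $\alpha(t)^{-1}t^*u$ is again smooth, $G$-invariant (since $\Tt$ commutes with $G$), and CR (since $\Tt$ preserves the CR structure). By Step~2 its image is $\alpha(t)^{-1}t^*v=v$. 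Injectivity then gives $t^*u=\alpha(t)u$, so $u\in H^0_{b,\alpha}(X)^G$ and the map is onto.
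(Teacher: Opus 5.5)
Your proposal is correct and follows essentially the same route as the paper. You verify the hypotheses of Theorem~\ref{thm:main} through CR embeddability of the Sasakian manifolds $X$ and, componentwise, $X_G$; the paper cites Marinescu--Yeganefar \cite[Theorem~5.2]{MY07} for exactly the reduced three-dimensional case you flag. You then use $\Tt$-equivariance of $\sigma_G$ together with injectivity to force the unique smooth lift into the $\alpha$-weight space, which is also what the paper does. The one point you take for granted is the smoothness of the weight spaces; the paper checks this explicitly, using ellipticity of $\bar\partial_{b,Z}^{\,*}\bar\partial_{b,Z}-R_Z^2$.
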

\handrev{The proof is given in Section~\ref{subsec:sasaki-proof}.}
The holomorphic line-bundle result holds in every nonnegative tensor
degree\negrev{, and in every integer degree when the reduced base has
positive complex dimension}.  
Its moment map uses the Kostant convention of
\cite[(6.1)]{HMM}; its relationship with \eqref{eq:moment-convention}
is checked in Section~\ref{subsec:circle}.
\begin{theorem}[\handrev{Holomorphic quantization and 
      negative-degree vanishing}]
\label{thm:line}
Let $M$ be a compact connected complex manifold of dimension $n\ge1$,
and let $(L,h^L)$ be a Hermitian holomorphic line bundle.  
Let a compact Lie group $G$ act holomorphically on $M$ 
and lift holomorphically to $L$, preserving $h^L$.  
For $\xi\in\mathfrak g$, use
$\xi_M=\left.\frac{d}{dt}\right|_0\exp(-t\xi)\cdot x$, let
$\mathcal L_\xi$ be the infinitesimal action on sections, and define
\begin{equation}\label{eq:Kostant}
 2\pi i\langle\widetilde\mu,\xi\rangle
    =\nabla^L_{\xi_M}-\mathcal L_\xi.
\end{equation}
\handrev{Assume that $G$ acts freely on
$P=\widetilde\mu^{-1}(0)\ne\varnothing$, and that $iR^L$ is positive}
definite on a neighborhood of $P$.  Let
\[
 M_G=P/G,\qquad L_G=(L|_P)/G
\]
carry their reduced holomorphic structures.  
Then $M_G$ is connected,
and canonical restriction and descent induce isomorphisms
\begin{equation}\label{eq:line-all}
 A_m:H^0(M,L^m)^G\overset{\sim}{\longrightarrow}H^0(M_G,L_G^m),
       \qquad m\in\handrev{\NN}=\{0,1,2,\ldots\}.
\end{equation}
These maps form an isomorphism of the corresponding 
nonnegatively graded section rings.

\negrev{For $m<0$, set $L^m=(L^*)^{-m}$ and
$L_G^m=(L_G^*)^{-m}$.  The invariant section space $H^0(M,L^m)^G$
vanishes in every negative degree, irrespective of the dimension of
$M_G$.  If $\dim_\CC M_G\ge1$, canonical restriction and descent are
isomorphisms $A_m$ for every $m\in\mathbb Z$; in negative degrees,
\begin{equation}\label{eq:negative-vanishing}
 H^0(M,L^m)^G=H^0(M_G,L_G^m)=0,\qquad m<0.
\end{equation}}
\end{theorem}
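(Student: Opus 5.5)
The plan is to reduce Theorem~\ref{thm:line} to Theorem~\ref{thm:main} via the unit circle bundle. Let $X=\{v\in L^*:|v|_{h^{L^*}}=1\}$, with the CR structure induced from the complex manifold $L^*$, and let $S^1$ act by fibre rotation. The group $\widetilde G=G\times S^1$ acts on $X$ through the given lift of $G$ to $L$, and hence to $L^*$. Every $u\in H^0(M,L^m)$ defines $\hat u(v)=\langle v^{\otimes m},u\rangle$ on $X$. This is a CR function of $S^1$-weight $m$, and $u\mapsto\hat u$ identifies $H^0(M,L^m)$ with the weight-$m$ CR functions on $X$. Near $\pi^{-1}(P)$ the Levi form of $X$ is $iR^L$ in horizontal directions, so it is positive definite there.

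For $m\ge0$ the argument runs as follows.
\begin{enumerate}
\item Compute the CR moment map of the $G$-action on $X$ with $\omega_0$ the connection form. Using \eqref{eq:Kostant}, one gets $\mu(x)=c\,\widetilde\mu(p(x))$ for a nonzero constant $c$; this is the check referred to Section~\ref{subsec:circle}. Hence $Y=p^{-1}(P)$, the action is free on $Y$, and $X_G=Y/G$ is the unit circle bundle of $L_G^*$ over $M_G$.
\item Verify that this reduced circle bundle is strictly pseudoconvex of dimension $2\dim_\CC M_G+1\ge1$. When $\dim_\CC M_G\ge1$ it is the boundary of a Grauert tube, so it is embeddable and $\dbarG$ has closed range. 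Assumption~\ref{ass:HMM}(i) then holds once $\dim X\ge5$. The case $n=1$ is handled separately: there $\dim_\CC M_G=0$ or $d=0$, and the claim is trivial or reduces to a finite quotient.
\item Apply Theorem~\ref{thm:main} on smooth functions. Since $\sigma_G$ commutes with the residual $S^1$-action, it maps the weight-$m$ component to the weight-$m$ component. These components are exactly $H^0(M,L^m)^G$ and $H^0(M_G,L_G^m)$, so $A_m$ is an isomorphism. Multiplicativity of $\sigma_G$ gives the ring isomorphism, and connectedness of $X_G$ gives connectedness of $M_G$.
\item When $\dim_\CC M_G=0$, $M_G$ is a finite set. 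Embeddability then fails trivially, so this case needs an argument of its own. There $H^0(M_G,L_G^m)$ is a product of lines, and connectedness should come from $\dim H^0(M,L^0)^G=1$, which I would get by proving $A_m$ bijective directly. This could use Lemma~\ref{lem:quotient}-type removability together with the zero-dimensional reduction. I expect this degenerate case to be the main obstacle.
\end{enumerate}

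For $m<0$, let $s\in H^0(M,L^m)^G$. Take $N\ge1$ and any $t\in H^0(M,L^{-mN})^G$; then $s^N t\in H^0(M,L^0)^G=\CC$, a constant. Evaluating at $P$, a $G$-invariant section of a negative power restricted to the zero level descends to a holomorphic section of $L_G^{m}$. I would show that the descended section vanishes by a maximum-principle argument on $|s|_{h}$. Near $P$ the bundle $L$ is positive, so $\log|s|^2_{h^{L^m}}$ is plurisubharmonic there. The key input is that the $G$-invariant norm function has a critical structure forcing its maximum onto $P$: along the complexified orbit directions transverse to $P$, $\widetilde\mu$ is the gradient of $\log|\cdot|_h$, so $|s|$ increases toward $P$.

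So the maximum of $|s|_h$ over $M$ is attained on $P$. Along $P/G=M_G$ the function $|s|$ is the norm of a section of the negative bundle $L_G^m$, whose log is strictly plurisubharmonic when $\dim M_G\ge1$. A maximum point then forces $s=0$ there, and hence $s\equiv0$ on $M$. When $\dim M_G=0$, I would instead use the positivity of $L$ in the orbit-transverse directions to get strict plurisubharmonicity at the maximum. The same argument gives $H^0(M_G,L_G^m)=0$ when $\dim_\CC M_G\ge1$, so $A_m$ is trivially an isomorphism between zero spaces.
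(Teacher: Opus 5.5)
\paragraph{The zero-dimensional reduced base.}
Your circle-bundle treatment of the case $\dim_\CC M_G\ge1$, $n\ge2$ is exactly the paper's argument in Section~\ref{subsec:circle}: the moment map $\mu=-2\pi p_L^*\widetilde\mu$, closed range on $X_G=\partial D_G$ by Kohn, $S^1$-equivariance of $\sigma_G$ and of its inverse, and restriction to weight-$m$ components. The finite-group case is also handled as in the paper.

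The genuine gap is the nonnegative-degree statement, together with connectedness of $M_G$, when $\dim_\CC M_G=0$ and $d\ge1$. Your step~2 calls this case ``trivial'' for $n=1$, but it is not. For $n=d=1$ (e.g.\ circle actions on $\mathbb P^1$ as in Remark~\ref{rem:stabilization}) the target is $H^0(M_G,L_G^m)\cong\CC^{\#M_G}$. You must prove that $M_G$ is a single point, that $H^0(M,L^m)^G$ is one-dimensional, and that its generator does not vanish on $P$. None of this is automatic.

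Here $\dim_\RR X_G=1$, so neither Theorem~\ref{thm:main} nor the Fredholm input of \cite{HMM} is available. Section~\ref{sec:examples} shows that the latter must be excluded in reduced dimension one. Step~4 offers no argument in their place, so your proposal does not prove the statement in this case.

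The missing idea is \emph{stabilization}. Replace $(M,L)$ by $M'=M\times\mathbb P^1$ and $L'=L\boxtimes\OO_{\mathbb P^1}(1)$, with the product metric and trivial $G$-action on the second factor. Then:
\begin{itemize}
\item $\widetilde\mu'$ is the pullback of $\widetilde\mu$ and $P'=P\times\mathbb P^1$;
\item freeness and positivity near $P'$ persist;
\item $M'_G=M_G\times\mathbb P^1$ has positive dimension, so your positive-dimensional argument applies, now with $\dim_\RR X'\ge5$ even when $n=1$.
\end{itemize}
By the K\"unneth formula the stabilized restriction map is $A_m\otimes\operatorname{id}_{V_m}$ with $V_m=H^0(\mathbb P^1,\OO(m))$. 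Since $V_m\ne0$ for $m\ge0$, $A_m$ is bijective, and connectedness of $M'_G$ gives connectedness of $M_G$. Because $V_m=0$ for $m<0$, the trick is confined to nonnegative degrees, consistent with Remark~\ref{rem:stabilization}.

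\paragraph{Negative degrees.}
Your argument here is essentially the paper's, but state the key step as a pointwise identity rather than a monotonicity heuristic. For invariant holomorphic $s$ and $\varphi=\log|s|^2_{h^{L^m}}$, the Kostant identity and holomorphicity give $d\varphi(J\xi_M)=-4\pi m\langle\widetilde\mu,\xi\rangle$. Hence at a global maximum $x$ of $|s|$ one has $\widetilde\mu(x)=0$. The claim that ``$|s|$ increases toward $P$'' is neither needed nor justified globally.

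At $x\in P$ one has $i\partial\bar\partial\varphi=-m\,iR^L>0$ on all of $T^{1,0}_xM$, which contradicts maximality in every reduced dimension. So the variant you reserve for $\dim M_G=0$ works in general. The detour through $L_G$ and the unused observation about $s^Nt$ can be dropped. Positivity of $L_G$ is needed only for the target vanishing $H^0(M_G,L_G^m)=0$ when $\dim_\CC M_G\ge1$.
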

Note that in Theorem~\ref{thm:line}, no global positivity or
semipositivity of $iR^L$, and no global K\"ahler assumption
on $M$, is required.

\structrev{A canonical holomorphic section isomorphism 
identifies the
section spaces by restriction and descent, whereas an index identity
compares virtual dimensions, expressed in the holomorphic setting as
alternating sums of cohomology dimensions.}
The present argument does not prove an
all-degree version of the almost-complex Spin$^c$ statement in
\cite[Theorem~6.1]{HMM}.  
When $\dim_{\RR}X_G=1$, restriction and descent need not take values in the
componentwise disk Hardy spaces, so this case must be excluded from
\cite[Theorem~1.2]{HMM}; Section~\ref{sec:examples}
 gives an explicit example,
and Remark~\ref{rem:dim-one} locates the gap in \cite{HMM}.

\structrev{When $\dim_\CC M_G=0$, the associated 
reduced circle bundle
$X_G$ is one-dimensional and Theorem~\ref{thm:main} does not apply;
the nonnegative-degree isomorphisms in Theorem~\ref{thm:line} follow
instead by stabilization (Section~\ref{subsec:powers}).}

This paper is organized as follows.
Section~\ref{sec:prelim} fixes the CR and Sobolev conventions
and lists the precise imported analytic results.  
Section~\ref{sec:proof} proves Theorem~\ref{thm:main}, and
Section~\ref{sec:applications} proves Corollary~\ref{cor:sasaki}
and Theorem~\ref{thm:line}.  
Section~\ref{sec:examples} gives examples and
discusses the one-dimensional CR obstruction.

\section{Geometric and analytic preliminaries}\label{sec:prelim}
The proof of Theorem~\ref{thm:main} uses
 \cite[Theorem~1.2]{HMM} and its accompanying regularity 
results together with compatible conventions for CR reduction 
and Sobolev spaces. These preliminaries fix that framework and 
record the analytic properties of the reduction map used later.  
The canonical restriction-and-descent map is also distinguished 
from its normalized counterpart, since only
the former preserves products of smooth CR functions.

\subsection{CR structures, characteristic forms and CR reduction}
We begin by fixing the characteristic form, which determines the
Levi-sign convention.  Its normalization of characteristic fields
will later be distinguished from the group-generated transverse
fields used in the local quotient argument.
A CR structure of hypersurface type on $X$ is a rank-$n$ complex
subbundle $T^{1,0}X\subset\CC TX$
\handrev{, the complexified tangent bundle,} such that
\[
 \begin{gathered}
 T^{1,0}X\cap T^{0,1}X=\{0\},\qquad
 T^{0,1}X=\overline{T^{1,0}X},\\
 [\handrev{\mathscr C}^\infty(X,T^{1,0}X),
 \handrev{\mathscr C}^\infty(X,T^{1,0}X)]
       \subset \handrev{\mathscr C}^\infty(X,T^{1,0}X).
 \end{gathered}
\]
The real bundle $HX$ is characterized by
$\CC HX=T^{1,0}X\oplus T^{0,1}X$, and $J$ acts by $i$ and $-i$ on
these two summands.  The characteristic conormal bundle is
$(HX)^0=\RR\omega_0$.  Orientability of $X$, together with 
the complex orientation of $HX$, permits this global choice.

The form $d\omega_0|_{HX}$ is of type $(1,1)$ by CR integrability.
Under convention \eqref{eq:levi-convention}, 
positivity of $\mathcal L$ is the positivity of 
$d\omega_0(\cdot,J\cdot)$ on $HX$. A characteristic
vector field $T$ is a real vector field satisfying
\begin{equation}\label{eq:characteristic}
 \omega_0(T)=1,\qquad
 \CC TX=T^{1,0}X\oplus T^{0,1}X\oplus\CC T.
\end{equation}
\handrev{This is the normalization of \cite[(2.8)--(2.9)]{HMM}.
Section~\ref{sec:local} uses locally chosen group-generated 
transverse CR fields $V_\xi=\xi_X$.}
If $\omega_0$ is contact,
its Reeb field $R$ is additionally characterized by 
$\iota_Rd\omega_0=0$.

We work under Assumption~\ref{ass:HMM}.  By
\cite[Lemma~2.5]{HMM}, the moment map $\mu:X\to\mathfrak g^*$ in
\eqref{eq:moment-convention} is regular at $0$.
By \cite[Theorem~2.6]{HMM}, the reduced space $X_G=Y/G$ is a strictly
pseudoconvex CR manifold with contact form $\omega_{0,G}$ satisfying
\begin{equation}\label{eq:reduced-form}
 \iota^*\omega_0=\pi^*\omega_{0,G}.
\end{equation}
We recall the induced complex structure $J_G$ on
$HX_G=\ker\omega_{0,G}$.
For $x\in X$, set
$\underline{\mathfrak g}_x
=\operatorname{Span}\{\xi_X(x):\xi\in\mathfrak g\}$.
Since $G$ acts freely on $Y$, these vector spaces form a vector 
bundle $\underline{\mathfrak g}$ near $Y=\mu^{-1}(0)$.
Write $\mathfrak g_Y=\underline{\mathfrak g}|_Y$.

On a neighborhood of $Y$, $g^{HX}=d\omega_0(\cdot,J\cdot)$ is a
positive $J$-invariant metric.  Write $T^HY$ for the complement of
$\mathfrak g_Y$ in $TY\cap HX$ relative to this metric.  The
splittings and invariance of \cite[(2.34)--(2.37)]{HMM} are
\begin{equation}\label{eq:reduction-splittings}
 \begin{split}
 TX|_Y&=TY\oplus J\mathfrak g_Y,\\
 TY\cap HX&=T^HY\oplus\mathfrak g_Y,\\
 HX|_Y&=T^HY\oplus\mathfrak g_Y\oplus J\mathfrak g_Y,
 \qquad JT^HY=T^HY.
 \end{split}
\end{equation}
The map
$d\pi:T^HY\to\pi^*HX_G$ is an isomorphism; transporting $J$ gives
$J_G$ via $(J_Gv)^H=Jv^H$\handrev{, where $v^H\in T^HY$ is defined
by $d\pi(v^H)=v$.}
This also proves that \eqref{eq:canonical-map} sends smooth 
invariant CR functions to smooth CR functions on $X_G$.

\subsection{Weak CR equations and ordinary Sobolev spaces }
\label{subsec:metrics}
\label{subsec:sobolev}
\handrev{Under Assumption~\ref{ass:HMM}, we use the adapted
$G$-invariant Hermitian metric $g^{TX}$ of
\cite[Lemma~2.7 and Convention~2.8]{HMM}:}
The decomposition in \eqref{eq:characteristic} is orthogonal,
\handrev{$|T|=1$}, and the orbit directions are orthogonal 
to the reduced horizontal directions on $Y$.
\handrev{It determines the $L^2$ pairings and weak 
operator realizations
used in the Fredholm statement, as well as the ordinary Sobolev norms
below.}  These choices induce the reduced metric
and the smooth volume densities $dv_X$ and $dv_{X_G}$.  The inner
products are linear in the first variable:
\begin{equation}\label{eq:L2-products}
 (u,v)=\int_Xu\overline v\,dv_X,\qquad
 (a,b)_{X_G}=\int_{X_G}a\overline b\,dv_{X_G}.
\end{equation}
The same convention is used for differential forms and bundle-valued
sections.  Write $L^2_{(0,q)}(X)$ for the space of square-integrable
sections of $T^{*0,q}X=\Lambda^qT^{*0,1}X$.

For smooth $u$, the tangential Cauchy--Riemann operator is given by
$(\dbarb u)(\overline Z)=\overline Zu$ for
$\overline Z\in T^{0,1}X$.  Its weak maximal extension is
\begin{equation}\label{eq:weak-maximal}
 \begin{split}
 \dbarb &: \Dom\dbarb\subset L^2(X)\longrightarrow L^2_{(0,1)}(X),\\
 \Dom\dbarb&=\{u\in L^2(X):\dbarb u\in L^2_{(0,1)}(X)
                               \text{ distributionally}\}.
 \end{split}
\end{equation}
It is a densely defined closed operator.  
The analogous definitions on
$X_G$ give $\dbarG$. Consequently, $\Hb(X)$ and $\Hb(X_G)$ are
closed Hilbert subspaces.  The orthogonal projections are denoted by
\begin{equation}\label{eq:szego-projections}
 \begin{gathered}
 S:L^2(X)\to\Hb(X),\qquad S_G:L^2(X)\to\Hb(X)^G,\\
 S_{X_G}:L^2(X_G)\to\Hb(X_G),
 \end{gathered}
\end{equation}
in agreement with \cite[(1.15)--(1.17)]{HMM}.
\handrev{In the following we use the ordinary elliptic Sobolev scale.}  Let $\Delta^X\ge0$ and
$\Delta^{X_G}\ge0$ be
the Laplace--Beltrami operators of the fixed metrics.  Following
\cite[(5.1)--(5.3)]{HMM}, put
\begin{equation}\label{eq:Lambda}
 \Lambda_s=(1+\Delta^X)^{s/2},\qquad
 \widehat\Lambda_s=(1+\Delta^{X_G})^{s/2},\quad s\in\RR.
\end{equation}
These operators are defined by spectral functional calculus on
distributions.  Set
\begin{align}
 H^s(X)&=\{u\in\mathcal D'(X):\Lambda_su\in L^2(X)\},
 & (u,v)_s&=(\Lambda_su,\Lambda_sv),\label{eq:sobolev-X}\\
 H^s(X_G)&=\{a\in\mathcal D'(X_G):\widehat\Lambda_sa\in L^2(X_G)\},
 & (a,b)_{X_G,s}&=(\widehat\Lambda_sa,\widehat\Lambda_sb)_{X_G}.
 \notag
\end{align}
\handrev{Write $\norm{\cdot}_s$ and $\norm{\cdot}_{X_G,s}$ for the
corresponding norms.  In particular, $H^0=L^2$, and negative-order
spaces contain distributions.}  We use the inclusions and smoothness
criterion
\begin{equation}\label{eq:nesting}
 H^{s_1}(X)\hookrightarrow H^{s_2}(X)\quad(s_1\ge s_2),\qquad
 \bigcap_{k\ge0}H^{s_0+k}(X)=\handrev{\mathscr C}^\infty(X).
\end{equation}
The analogous statements hold on $X_G$.  
Different smooth metrics give equivalent Sobolev norms.

The spaces in \eqref{eq:hardy-sobolev} are closed subspaces of these
Hilbert spaces.  Indeed, $\dbarb:H^s\to H^{s-1}_{(0,1)}(X)$ is
continuous, and $g^*:H^s\to H^s$ is continuous for each fixed 
\handrev{$g\in G$}. Thus, the CR equation and the invariance 
conditions are closed conditions.

\subsection{Fredholm properties of the canonical CR map}
\label{subsec:fredholm}

For a compact strictly pseudoconvex CR manifold of dimension at least
three, \handrev{\cite[\S~2.4]{HMM} recall} that global 
CR embeddability and closed
\handrev{range of $\dbarb$ on functions are equivalent.  
We apply this
equivalence componentwise to $X_G$.  In dimension at least five,
embeddability follows from \cite{BdM75}; in dimension three, closed
range is part of our hypotheses.}
\noterev{Closed range implies embeddability by the CR embedding
construction of Boutet de Monvel and Burns \cite{BdM75,Bur79}, using
the regularity and local structure of the Szeg\H{o} projection
\cite{BMS}.  Conversely, an embedded $X_G$ is the strictly
pseudoconvex boundary of a compact complex manifold, obtained from
the Harvey--Lawson filling \cite{HL75,HLadd} by resolving its finitely
many interior singularities; see \cite[\S~2.4]{HMM} and
\cite[\S~1]{MY07}.  Kohn's closed-range theorem \cite{Kohn86}
applies to such boundaries.}

The following formulation collects the consequences of \cite{HMM}
needed below for intrinsic CR targets of real dimension at least three.
\begin{theorem}[Hsiao--Ma--Marinescu]\label{thm:HMM-input}
Assume the geometric hypotheses in Assumption~\ref{ass:HMM}.
\handrev{Suppose $\dim_{\RR}X_G\ge3$, and that}
$\dbarG$ has closed range in $L^2$.  Then the following hold.
\begin{enumerate}
\item[(a)] For every $s\in\RR$, smooth invariant CR functions are
dense in $\Hb(X)^G_s$ in the $H^s(X)$ topology.  
The canonical map has a bounded Fredholm
extension $\sigma_{G,s}:\Hb(X)^G_s\to\Hb(X_G)_{s-d/4}$.
\item[(b)] Its kernel is a finite-dimensional subspace of
$\handrev{\mathscr C}^\infty(X)\cap\Hb(X)^G$, independent of $s$.
\item[(c)] With respect to the reduced Sobolev inner product
defined in Section~\ref{subsec:sobolev},
\begin{equation}\label{eq:orthogonal-cokernel}
 (\Ran\sigma_{G,s})^\perp
 =\{v\in\Hb(X_G)_{s-d/4}:(\sigma_{G,s}u,v)_{X_G,s-d/4}=0\ 
 \handrev{\text{for all }}u\in\Hb(X)^G_s\}
\end{equation}
is a finite-dimensional space of smooth reduced CR functions.  Its
\emph{dimension}, and hence the index, is independent of $s$.
\item[(d)] $S_G$ and $S_{X_G}$ preserve smoothness.  
In particular, $\Hb(X_G)\cap \handrev{\mathscr C}^\infty(X_G)$ 
is dense in $\Hb(X_G)$.
\end{enumerate}
\end{theorem}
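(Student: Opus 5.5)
This statement is a packaging of results imported from \cite{HMM}, so the plan is to identify, for each item, where it appears there, and check that the hypotheses transfer. Under Assumption~\ref{ass:HMM}, $X_G$ is a compact strictly pseudoconvex CR manifold by \cite[Theorem~2.6]{HMM}. Assuming $\dim_{\RR}X_G\ge3$ and closed range of $\dbarG$ puts us in the setting of \cite[Theorem~1.2]{HMM}. That result gives, for every $s\in\RR$, a bounded Fredholm map $\Hb(X)^G_s\to\Hb(X_G)_{s-d/4}$ extending \eqref{eq:canonical-map}. The dimension restriction is exactly what excludes the one-dimensional gap mentioned in the introduction.

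For (a), I would take the density of smooth invariant CR functions in $\Hb(X)^G_s$ from \cite{HMM}'s construction. Concretely, smoothing by $S_G$, which preserves $\mathscr C^\infty$ by (d), combined with the Sobolev continuity of $S_G$ on $H^s$, approximates any $u=S_Gu\in\Hb(X)^G_s$ by $S_G$ of smooth functions. Uniqueness of the continuous extension then follows from this density.

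For (b), I would argue as follows.
\begin{itemize}
\item A kernel element satisfies $\sigma_{G,s}u=0$. Through the microlocal description $\sigma_G\sim \iota_G\iota^* S_G$ as a Fourier integral operator with an elliptic left parametrix modulo smoothing operators, this gives $u=Ru$ with $R$ smoothing. Hence $u$ is smooth.
\item Once the kernel lies in $\mathscr C^\infty$, its independence of $s$ is immediate from the nesting \eqref{eq:nesting}: a smooth kernel element at order $s$ lies in every $\Hb(X)^G_{s'}$. The extensions agree on smooth functions, so the kernels coincide.
\end{itemize}

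For (c), the Sobolev-orthogonal cokernel is identified with the kernel of the adjoint. After conjugating by $\widehat\Lambda_{s-d/4}$, this adjoint is again built from $S_{X_G}$ and the FIO. Its parametrix likewise shows that cokernel elements are smooth. Independence of the index from $s$ follows by a standard argument. Finite-dimensional smooth kernels are the same for all $s$. Since smooth functions are dense, the range closures of different orders intersect the smooth functions compatibly, so the codimension does not change. Alternatively, one can use homotopy invariance of the index along the family $\widehat\Lambda_t\sigma_G\Lambda_{-t}$.

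For (d), I would use \cite{BMS} and the $G$-invariant Szeg\H{o} analysis of \cite{HH,HMM}: under closed range, $S_{X_G}$ and $S_G$ are Fourier integral operators of complex type, continuous on every $H^s$. Density of smooth reduced CR functions follows by applying $S_{X_G}$ to smooth approximants.

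The main obstacle is (c), the $s$-independence of the cokernel dimension. The orthogonal complement depends on the inner product at each order, so one must check carefully that the index, not the specific space, is invariant. I would handle this by the homotopy argument above rather than by direct comparison.
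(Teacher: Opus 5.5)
Your overall plan agrees with the paper, whose proof of this statement is a list of citations to \cite{HMM}. In particular, your derivation of the density in (a) from smoothness preservation and Sobolev boundedness of $S_G$ is how \cite[Corollary~5.2]{HMM} is obtained. The genuine gap is the step you single out yourself: the $s$-independence of $\dim(\Ran\sigma_{G,s})^\perp$ in (c). The homotopy along $\widehat\Lambda_t\sigma_G\Lambda_{-t}$ is not a path of Fredholm operators between fixed Hilbert spaces, for three reasons.
First, $\Lambda_{-t}$ does not in general preserve $\Ker\dbarb$.
Second, $\sigma_G$ obeys the $d/4$ estimate only on CR functions. On general invariant functions, restriction to the codimension-$d$ submanifold $Y$ loses $d/2$ derivatives and is undefined at low order.
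Third, $\widehat\Lambda_t$ does not map $\Hb(X_G)$ into itself.
Repairing this would mean inserting $S_G$ and $S_{X_G}$, proving norm continuity in $t$, and computing the indices of auxiliary Toeplitz factors such as $S_G\Lambda_{-t}S_G:\Hb(X)^G\to\Hb(X)^G_t$. That is no easier than the claim itself.

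Your ``direct comparison'' sentence does not close the gap either. Closed range and smoothness of the Sobolev-orthogonal complements give $\dim(\Ran\sigma_{G,s})^\perp=\dim\B/(\B\cap\Ran\sigma_{G,s})$. Compatibility gives $\B\cap\Ran\sigma_{G,s_1}\subset\B\cap\Ran\sigma_{G,s_2}$ for $s_1>s_2$. Together these show only that the cokernel dimension is monotone in $s$. The reverse inclusion is a regularity statement that you never formulate.

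The missing statement is: if $u\in\Hb(X)^G_{s_2}$ and $\sigma_{G,s_2}u\in H^{s_1-d/4}(X_G)$ with $s_1>s_2$, then $u\in H^{s_1}(X)$. It follows from the left parametrix you invoke in (b), applied to an arbitrary $u$ rather than only to kernel elements. With it, compatibility yields $\Ran\sigma_{G,s_1}=\Ran\sigma_{G,s_2}\cap H^{s_1-d/4}(X_G)$. Hence the natural map $\Hb(X_G)_{s_1-d/4}/\Ran\sigma_{G,s_1}\to\Hb(X_G)_{s_2-d/4}/\Ran\sigma_{G,s_2}$ is injective. It is surjective because the order-$s_2$ Sobolev-orthogonal complement consists of smooth functions, which lie in $\Hb(X_G)_{s_1-d/4}$. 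Together with the $s$-independent kernel from (b), this gives constancy of the index. This is \cite[Theorem~5.6]{HMM}. The paper uses it essentially in Lemma~\ref{lem:smoothrange} and in the last step of the proof of Theorem~\ref{thm:main}, so it cannot be waved through.

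Two smaller hypothesis-transfer points. In (d), do not derive the properties of $S_G$ from closed range. Assumption~\ref{ass:HMM}(i) assumes neither Levi positivity away from $Y$ nor closed range of $\dbarb$ on $X$, and the paper stresses that \cite[Theorem~3.27]{HMM} holds in exactly that setting. In (a), for $s<0$ the identity $u=S_Gu$ on $\Hb(X)^G_s$, with $S_G$ meaning its Sobolev extension, is part of what \cite[Corollary~5.2]{HMM} supplies. It does not follow from the $L^2$ definition of $S_G$.
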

The density assertion in (a) is stated in
\cite[Corollary~5.2]{HMM}.  It follows there from smoothness 
preservation by $S_G$ (Theorem~3.27) and its boundedness on
$H^s(X)$ for every real $s$ (Theorem~5.1 applied to $S_G$).
These invariant-projector results apply under
Assumption~\ref{ass:HMM}(i), with Levi positivity only near $Y$ and
without a closed-range assumption for the full operator $\dbarb$ 
on $X$. For the remaining assertions, boundedness in (a) is
\cite[Theorem~5.3]{HMM}, and Fredholmness is \cite[Theorem~1.2]{HMM};
(b) is \cite[Theorem~5.4]{HMM}; smooth finite-dimensionality 
in (c) is
\cite[Theorem~5.5]{HMM}, and independence of its dimension is
\cite[Theorem~5.6]{HMM}.  For (d), use \cite[Theorem~3.27]{HMM} for
$S_G$ and \cite[(1.17)]{HMM} for $S_{X_G}$.

Uniqueness of continuous extension gives compatibility.  More
explicitly, if $s_1\ge s_2$ and $u\in\Hb(X)^G_{s_1}$, take
$u_\nu\in \handrev{\mathscr C}^\infty(X)\cap\Hb(X)^G$ 
with $u_\nu\to u$ in
$H^{s_1}(X)$.  The same convergence holds in $H^{s_2}(X)$, and
\[
 \norm{\sigma_Gu_\nu-\sigma_{G,s_1}u}_{X_G,s_1-d/4}
 \le C_{s_1}\norm{u_\nu-u}_{s_1}\longrightarrow0.
\]
The same sequence converges to $\sigma_{G,s_2}u$ at the lower target
order.  Thus
\begin{equation}\label{eq:compatibility}
 \sigma_{G,s_1}u=\sigma_{G,s_2}u
 \quad\text{as distributions on }X_G.
\end{equation}

Theorem~\ref{thm:HMM-input} invokes the direct
assertion of \cite[Theorem~1.2]{HMM} for the canonical map
$\sigma_G=\iota_G\circ\iota^*$ and its Sobolev extensions
$\sigma_{G,s}:\Hb(X)^G_s\to\Hb(X_G)_{s-d/4}$, $s\in\RR$,
defined in \cite[(1.5)--(1.6)]{HMM}.
The normalized projected operator of \cite[(1.20)]{HMM} is a different
auxiliary map; multiplicativity in this paper always concerns
$\sigma_G$ on smooth invariant CR functions.

\section{Bijectivity of the canonical CR reduction map}\label{sec:proof}

Theorem~\ref{thm:HMM-input} leaves a finite-dimensional
kernel and cokernel as possible obstructions to bijectivity.
Multiplicativity and connectedness
of $X$ eliminate the kernel.  Regularity of the Fredholm complements
then identifies the smooth image with a finite-codimensional subalgebra
of the reduced CR functions, providing the starting point for the
conductor argument.
Throughout Section~\ref{sec:proof}, the
hypotheses of Theorem~\ref{thm:main} are in force. 

This section is organized as follows.
In Section \ref{sec:range}, we prove the injectivity of $\sigma_G$
and introduce the conductor of the smooth range algebra. 
 In Section \ref{subsec:szego-proof}, we prove that its conductor 
has no common zero on $X_G$, stated as 
Lemma~\ref{lem:nocommon}, using
the Szeg\H{o} kernel. In Section \ref{subsec:peaks}, we provide an 
alternative proof by using global CR peak functions.  
In Section \ref{sec:local}, we explain the BRT coordinate, which gives 
a local complex quotient on which the invariant CR function becomes holomorphic.
In Section \ref{sec:surjectivity}, we construct a lift
 of a reduced CR function.  We first form that lift
where suitable conductor denominators do not vanish, then remove its
apparent poles. In Section \ref{s:proofSob}, 
we pass from smooth surjectivity to the full Sobolev
isomorphism as stated in Theorem \ref{thm:main}. 
The resulting algebra isomorphism also yields
the connectedness of $X_G$. 

\subsection{Injectivity of $\sigma_G$ and the smooth range algebra}\label{sec:range}
Set
\begin{equation}\label{eq:algebras}
 \C=\handrev{\mathscr C}^\infty(X)\cap\Hb(X)^G,\qquad
 \B=\handrev{\mathscr C}^\infty(X_G)\cap\Hb(X_G),\qquad
 \A=\sigma_G(\mathcal{C})\subset\B.
\end{equation}
The canonical map is a unital homomorphism of commutative complex
algebras.

\begin{proposition}[Injectivity]\label{prop:injective}
For every $s\in\RR$, $\Ker\sigma_{G,s}=\{0\}$.
\end{proposition}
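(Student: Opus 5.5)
By Theorem~\ref{thm:HMM-input}(b), the kernel $K=\Ker\sigma_{G,s}$ is a finite-dimensional subspace of $\C$ that does not depend on $s$. It therefore suffices to show that $\sigma_G$ is injective on the smooth algebra $\C$. Since $\sigma_G$ is a unital algebra homomorphism on $\C$, the kernel $K$ is an ideal of $\C$. If $u\in K$, then every power $u^k$, $k\ge1$, is again smooth, CR and $G$-invariant, because the CR equation is a derivation and $G$ acts by pullback. Moreover $\sigma_G(u^k)=(\sigma_Gu)^k=0$, so $u^k\in K$. The plan is to show that a nonzero $u$ produces infinitely many linearly independent powers, which contradicts $\dim K<\infty$.

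Assume $u\in K$ and $u\not\equiv0$. Finite-dimensionality of $K$ forces a nontrivial linear relation $\sum_{k=1}^N a_ku^k=0$ on $X$. Hence $u$ takes values, pointwise, in the finite zero set $Z$ of the polynomial $p(z)=\sum a_kz^k$. Since $u$ is continuous and $X$ is connected, $u$ is constant, say $u\equiv c$. We also know $u=0$ on $Y$, because $\sigma_Gu=0$ means $u\circ\iota=0$, and $Y\neq\varnothing$ by \eqref{eq:main-extra}. So $c=0$, contradicting $u\not\equiv0$. This gives $K\cap\C=\{0\}$.

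Combined with $K\subset\C$ and its independence of $s$, we get $\Ker\sigma_{G,s}=\{0\}$ for every $s\in\RR$.

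The only delicate point is the step from the linear relation to constancy. Here one must use that a finite-dimensional space containing all powers $u^k$ yields a genuine nonzero polynomial relation. Among $u,u^2,\dots,u^{N}$ with $N>\dim K$ there is a linear dependence with coefficients not all zero, so $p\neq0$ and its zero set is finite. Continuity of $u$ on the connected space $X$ then gives constancy, and the zero-set argument needs no CR theory at all. This step also uses $Y\neq\varnothing$, which is exactly why that hypothesis appears in \eqref{eq:main-extra}. When $Y=\varnothing$ the map is zero and the constants lie in its kernel.
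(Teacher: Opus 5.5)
Your proof is correct and follows the paper's argument essentially step for step. You use Theorem~\ref{thm:HMM-input}(b) for smoothness and finite-dimensionality of the kernel, multiplicativity to keep all powers $u^k$ in that kernel, a nonzero polynomial relation from finite dimension, and then connectedness of $X$ together with $u|_Y=0$ and $Y\ne\varnothing$ to conclude $u\equiv0$.
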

\begin{proof}
\handrev{Take $u\in\Ker\sigma_{G,s}$. By 
Theorem~\ref{thm:HMM-input}(b), $u$ is smooth and $u|_Y=0$.}
For every integer $k\ge1$, the Leibniz rule and invariance give
$u^k\in\C$, and multiplicativity gives $\sigma_G(u^k)=0$.
Since every smooth function belongs to $H^s(X)$, all the powers lie
in the same finite-dimensional kernel.  Some nontrivial finite linear
combination of them is zero, that is, $P(u)=0$ 
for a nonzero polynomial
$P\in\CC[t]$.  The continuous image $u(X)$ is a connected subset of
the finite zero set of $P$, so $u$ is constant.  
Since $Y$ is nonempty and $u|_Y=0$, the constant is zero.
\end{proof}
The map $\sigma_G:\C\to\A$ is therefore an 
algebra isomorphism.
For each $k=0,1,2,\ldots$, consider the map at Sobolev order $d/4+k$:
\begin{equation}\label{eq:Tk-F}
 \begin{gathered}
 T_k=\sigma_{G,d/4+k}:\Hb(X)^G_{d/4+k}\longrightarrow\Hb(X_G)_k,
 \quad k\in\NN,\\
 \mathcal R_0=\Ran T_0,\qquad F=\mathcal R_0^\perp\subset\Hb(X_G).
 \end{gathered}
\end{equation}
\handrev{The orthogonal complement is taken in the Hardy space 
with its $L^2$ inner product.}  Fredholmness gives a closed
\handrev{range, and \cite[Theorem~5.5]{HMM}, applied at $s=d/4$
(target order zero), gives the $L^2$ orthogonal decomposition }
\begin{equation}\label{eq:F-smooth}
 \Hb(X_G)=\mathcal R_0\oplus F,\qquad F\subset\B,
 \qquad r:=\dim_\CC F<\infty.
\end{equation}

\begin{lemma}[Smooth range]\label{lem:smoothrange}
One has $\A=\B\cap\mathcal R_0$ and the $L^2$ orthogonal decomposition 
$\B=\A\oplus F$.
\end{lemma}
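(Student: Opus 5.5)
The plan is to prove the two inclusions $\A\subset\B\cap\mathcal R_0$ and $\B\cap\mathcal R_0\subset\A$, and then read off $\B=\A\oplus F$ from \eqref{eq:F-smooth}.

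The first inclusion is immediate. Take $a=\sigma_G u$ with $u\in\C$. Since $u$ is smooth it lies in $\Hb(X)^G_{d/4}$, and by \eqref{eq:compatibility} the value $T_0u$ coincides with $\sigma_Gu$. Hence $a\in\B\cap\mathcal R_0$.

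The reverse inclusion is a regularity statement. Let $b\in\B\cap\mathcal R_0$, so $b=T_0u$ for some $u\in\Hb(X)^G_{d/4}$. We must show that $u$ can be taken smooth. For each $k$, the function $b$ lies in $\Hb(X_G)_k$, and I would first show that $b\in\Ran T_k$. Fredholmness gives that $\Ran T_k$ is closed. By Theorem~\ref{thm:HMM-input}(c), its $H^k$-orthogonal complement $F_k$ has dimension $r$, independent of $k$. Moreover $\Ran T_k\subset\Ran T_0\cap\Hb(X_G)_k$ by compatibility.

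The cleanest route avoids comparing different inner products directly. The subspace $\Ran T_k$ has codimension $r$ in $\Hb(X_G)_k$. The subspace $\mathcal R_0\cap\Hb(X_G)_k$ contains $\Ran T_k$, and its codimension in $\Hb(X_G)_k$ is at least $r$. To see this, note that $F\subset\B\subset\Hb(X_G)_k$ and $F\cap\mathcal R_0=0$, so the induced map $F\to\Hb(X_G)_k/(\mathcal R_0\cap\Hb(X_G)_k)$ is injective. Comparing codimensions forces $\Ran T_k=\mathcal R_0\cap\Hb(X_G)_k$. Hence $b=T_ku_k$ with $u_k\in\Hb(X)^G_{d/4+k}$.

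By compatibility, $T_0u_k=b=T_0u$, so $u-u_k\in\Ker T_0$. This kernel is zero by Proposition~\ref{prop:injective}. Therefore $u=u_k\in H^{d/4+k}$ for every $k$, and \eqref{eq:nesting} gives $u\in\C$. Then $b=\sigma_Gu\in\A$, which proves $\A=\B\cap\mathcal R_0$.

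For the decomposition, let $b\in\B$ and write $b=a+f$ with $a\in\mathcal R_0$ and $f\in F$, using \eqref{eq:F-smooth}. Since $F\subset\B$, we get $a=b-f\in\B\cap\mathcal R_0=\A$. Orthogonality is inherited from $\mathcal R_0\perp F$, so $\B=\A\oplus F$.

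The main obstacle is the codimension-counting step. It uses that the dimension of the orthogonal complement is independent of $s$ (Theorem~\ref{thm:HMM-input}(c)), and that $F$ consists of smooth functions, so that it sits inside every $\Hb(X_G)_k$. These two facts together are exactly what upgrade $L^2$-range membership to smooth preimages.
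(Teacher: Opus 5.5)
Your proof is correct and follows essentially the same route as the paper. The paper uses the injection $F\to\Hb(X_G)_k/\Ran T_k$ and the order-independence of the cokernel dimension to obtain $\Hb(X_G)_k=\Ran T_k\oplus F$, then removes the $F$-component of $b$ using $F\cap\mathcal R_0=0$; you run the same codimension count against $\mathcal R_0\cap\Hb(X_G)_k$ to get $\Ran T_k=\mathcal R_0\cap\Hb(X_G)_k$ directly, and your remaining steps (compatibility, injectivity of $T_0$, the nesting criterion, and reading off $\B=\A\oplus F$) coincide with the paper's.
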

\begin{proof}
By \eqref{eq:compatibility}, $T_ku=T_0u$ after the target inclusion,
so $\Ran T_k\subset\mathcal R_0$.  Since $F$ is smooth,
$F\subset\Hb(X_G)_k$, and $F\cap\Ran T_k=0$.  Consider the map
induced by the quotient projection,
\[
 F\longrightarrow\Hb(X_G)_k/\Ran T_k.
\]
\handrev{It is injective.  By \cite[Theorem~5.6]{HMM}, applied at
$s=d/4+k$, its target has dimension $r$ for every $k\in\NN$.
This constancy also follows from the independence of both the kernel
and the index in \cite[Theorem~1.2]{HMM}.  Since $\dim F=r$, the map
is an isomorphism.  Therefore}
\begin{equation}\label{eq:Tk-complement}
 \Hb(X_G)_k=\Ran T_k\oplus F.
\end{equation}
\handrev{This is a vector-space direct sum.}

Take $b\in\B\cap\mathcal R_0$.  It belongs to every
$\Hb(X_G)_k$.  By \eqref{eq:Tk-complement} write
$b=T_ku_k+f_k$, with $f_k\in F$.  Both $b$ and $T_ku_k$ lie in
$\mathcal R_0$, so $f_k\in F\cap\mathcal R_0=0$.
Consequently $T_ku_k=b$ for all $k$.  Compatibility and
Proposition~\ref{prop:injective} imply that all $u_k$ coincide as a
single distribution $u$.  Since $u\in H^{d/4+k}(X)$ for every $k$,
\eqref{eq:nesting} gives $u\in \handrev{\mathscr C}^\infty(X)$.  It is invariant and
CR, hence $b\in\A$.  The opposite inclusion is immediate.
Finally the $L^2$ decomposition of a smooth $b$ in
$\mathcal R_0\oplus F$ has a smooth $F$ component, and therefore a
smooth $\mathcal R_0$ component.  The first assertion puts the latter
in $\A$.
\end{proof}

Choose an $L^2$-orthonormal basis $e_1,\ldots,e_r$ of $F$ and 
by Lemma~\ref{lem:smoothrange}, we have
\begin{equation}\label{eq:lambda}
\A=\bigcap_{i=1}^r\Ker\lambda_i\quad\text{inside }\B,
\quad \text{  with   }  \lambda_i(b)=(b,e_i)_{X_G}.
\end{equation}
If $r=0$, smooth surjectivity is already proved.  
The conductor arguments below also cover this case, 
since then $\A=\B$ and $1$ belongs to the conductor.

Define the conductor of the algebra inclusion $\A\subset\B$ by
\begin{equation}\label{eq:conductor}
 \I=\{c\in\B:c\B\subset\A\}.
\end{equation}
Since $1\in\B$, every $c\in\I$ belongs to $\A$.  Addition and
additive inverses preserve the condition in \eqref{eq:conductor}.
If $c\in\I$ and $b,f\in\B$, then $(bc)f=c(bf)\in\A$, so
$bc\in\I$.  Thus $\I$ is an ideal of $\B$ contained in $\A$.
If $J_0$ is any ideal of $\B$ contained in $\A$, then
$c\B\subset J_0\subset\A$ for $c\in J_0$, hence $J_0\subset\I$.
This proves that $\I$ is the largest such ideal with respect to
inclusion.  Equivalently, it is $\Ann_{\A}(\B/\A)$ when the quotient
is regarded as an $\A$-module.

Put $b_0=1$ and $b_j=e_j$ for $1\le j\le r$.  Then
\begin{equation}\label{eq:conductor-equations}
 \I=\bigcap_{\substack{1\le i\le r\\0\le j\le r}}
       \Ker\bigl(c\longmapsto\lambda_i(cb_j)\bigr)
       \quad\text{inside }\B.
\end{equation}
A proof of \eqref{eq:conductor-equations} is as follows. 
By (\ref{eq:lambda}), $j=0$ forces $c\in\A$, and the other 
equations give $ce_j\in\A$.  Writing any $b\in\B$ as
$a+\sum_j\gamma_je_j$, with $a\in\A$ and $\gamma_j\in\CC$,
then proves $cb\in\A$.  The converse is the definition of the
conductor.  

The finitely many defining functionals have
smooth integral representatives:
\begin{equation}\label{eq:lambda-products}
 \lambda_i(cb_j)=(cb_j,e_i)_{X_G}
                    =(c,\overline{b_j}e_i)_{X_G}.
\end{equation}

\subsection{Nonvanishing of conductors via the Szeg\H{o} kernel.}
\label{subsec:szego-proof}
 The conductor equations above provide the algebraic part of the
 nonvanishing argument.  A hypothetical common zero would produce a
 smooth representative of point evaluation; ruling out such a
 representative is the remaining task.

\begin{lemma}[No common zero]\label{lem:nocommon}
For every $p\in X_G$ there is $c\in\I$ such that $c(p)\ne0$.
Consequently there are finitely many $c_1,\ldots,c_N\in\I$ with no
common zero on $X_G$.
\end{lemma}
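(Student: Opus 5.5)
The plan is to argue by contradiction, combining the finite-codimension description \eqref{eq:conductor-equations} of $\I$ with the singularity of the reduced Szeg\H{o} kernel.

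\emph{Step 1: a common zero gives a smooth representative of point evaluation.} Suppose $p\in X_G$ and $c(p)=0$ for every $c\in\I$. By \eqref{eq:conductor-equations} and \eqref{eq:lambda-products}, $\I$ is the common kernel in $\B$ of the finitely many functionals $c\mapsto(c,\overline{b_j}e_i)_{X_G}$. These have smooth kernels $f_{ij}=\overline{b_j}e_i$. The evaluation $\delta_p:c\mapsto c(p)$ vanishes on the intersection of their kernels. Finite-dimensional linear algebra, applied to the map $\B\to\CC^{r(r+1)}$ given by these functionals, then yields constants $\alpha_{ij}$ with
\[
 c(p)=\sum_{i,j}\alpha_{ij}(c,f_{ij})_{X_G}=(c,f)_{X_G},\qquad c\in\B,\quad f=\sum\overline{\alpha_{ij}}f_{ij}\in\mathscr C^\infty(X_G).
\]

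\emph{Step 2: move this identity into the Hardy space.} Since $S_{X_G}$ is self-adjoint and fixes $\B$, we get $c(p)=(c,S_{X_G}f)_{X_G}$ for all $c\in\B$. By Theorem~\ref{thm:HMM-input}(d), $h:=S_{X_G}f$ is smooth and lies in $\B$. Smooth CR functions are dense in $\Hb(X_G)$, also by (d).

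\emph{Step 3: compare with the Szeg\H{o} kernel.} For $v\in\mathscr C^\infty(X_G)$, set $c=S_{X_G}v\in\B$. Then
\[
 (S_{X_G}v)(p)=(S_{X_G}v,h)_{X_G}=(v,h)_{X_G}.
\]
So the Schwartz kernel $S_{X_G}(p,\cdot)$ coincides with the smooth function $\overline{h(\cdot)}$. This is the step I expect to be the real obstacle, since it requires the imported microlocal structure. Because $X_G$ is compact, strictly pseudoconvex, of dimension $2(n-d)+1\ge3$, and $\dbarG$ has closed range, the Boutet de Monvel--Sj\"ostrand description \cite{BMS} applies. It says $S_{X_G}$ is a Fourier integral operator with complex phase whose kernel has a nonsmooth leading singularity on the diagonal, of the form $\varphi^{-(n-d)-1}$ with nonzero leading symbol. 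In particular, $y\mapsto S_{X_G}(p,y)$ is not smooth near $y=p$; for instance, $S_{X_G}(p,y)$ blows up as $y\to p$ along the characteristic direction. This contradicts the smoothness of $h$.

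To make the contradiction fully rigorous without fine kernel bounds, I would test against a family $v_k$ concentrating at $p$. One choice is the localized Szeg\H{o} peak sections $v_k=S_{X_G}(\chi e^{ik\psi})$. For these, $|(S_{X_G}v_k)(p)|$ grows like a positive power of $k$ relative to $\|v_k\|_{H^{-N}}$, while $|(v_k,h)|\le\|v_k\|_{H^{-N}}\|h\|_{H^N}$ does not. The first claim follows from the stationary phase expansion in \cite{BMS}.

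\emph{Step 4: finitely many elements.} Once nonvanishing holds at every point, the sets $\{c\ne0\}$ for $c\in\I$ form an open cover of the compact space $X_G$. Extracting a finite subcover gives $c_1,\dots,c_N\in\I$ with no common zero.
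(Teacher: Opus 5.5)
Your proposal is correct and follows the paper's first proof essentially step for step: the finitely many conductor functionals give a smooth representative of evaluation at $p$, you pass it through $S_{X_G}$, identify $S_{X_G}(p,\cdot)$ off the diagonal with a smooth function, contradict this with the Boutet de Monvel--Sj\"ostrand singularity, and finish by compactness. The only difference is in how the blow-up is shown: the paper makes it explicit along a curve in $\{\operatorname{Re}\phi(p,\cdot)=0\}$ tangent to $H_pX_G$, where the integral converges absolutely and $\varepsilon^{2q+2}S_{X_G}(p,y_\varepsilon)\to q!\,a_0(p,p)\lambda^{-q-1}\neq0$, whereas along the characteristic direction you would need the closed-form singularity $F\,(-i\phi)^{-q-1}+G\log(-i\phi)$ with $F(p,p)\neq0$ rather than the raw oscillatory integral.
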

\structrev{We first prove Lemma~\ref{lem:nocommon} using the
Szeg\H{o} kernel; Subsection~\ref{subsec:peaks} gives a peak-function
alternative.}
To prepare both proofs, suppose instead that every $c\in\I$ vanishes
at a fixed $p$.  Define
\[
 L:\B\longrightarrow\CC^{r(r+1)},\qquad
 L(b)=(\lambda_i(bb_j))_{i,j}.
\]
The functional $E_p(b)=b(p)$ vanishes on $\Ker L=\I$, so it
\handrev{factors through $\Ran L$.  
Extending this functional to the ambient
finite-dimensional space gives constants $\alpha_{ij}$ such that}
\begin{equation}\label{eq:evaluation-representative}
 b(p)=\sum_{i,j}\alpha_{ij}\lambda_i(bb_j)=(b,h)_{X_G},\qquad
 h=\sum_{i,j}\overline{\alpha_{ij}}\,\overline{b_j}e_i
                    \in \handrev{\mathscr C}^\infty(X_G).
\end{equation}

\begin{proof}[First proof of Lemma~\ref{lem:nocommon}]
Suppose that $p$ is a common zero of $\I$, and let $h$ be the smooth
function obtained in \eqref{eq:evaluation-representative}.  We show
that no such representative of point evaluation can exist.
\structrev{Since $X_G$ is compact and strictly pseudoconvex and
$\bar\partial_{b,X_G}$ has closed range, the Boutet de
Monvel--Sj\"ostrand construction \cite{BMS} applies directly to its
Szeg\H{o} projector.
Write $\dim_{\RR}X_G=2q+1$ with $q=n-d\ge1$; near $p$, its kernel
has the form}
\begin{equation}\label{eq:szego-parametrix}
 S_{X_G}(x,y)=\int_0^\infty e^{it\phi(x,y)}\chi(t)a(x,y,t)\,dt
                                      +R(x,y),
\end{equation}
where the integral is oscillatory, $R$ is smooth, $\chi=0$ for
$t\le1$ and $\chi=1$ for $t\ge2$, and $a$ is a classical symbol
of order $q$.  On a smaller precompact coordinate neighborhood,
\begin{equation}\label{eq:szego-leading}
 \begin{gathered}
 a(x,y,t)=a_0(x,y)t^q+r(x,y,t),\qquad a_0(p,p)\ne0,\\
 |r(x,y,t)|\le Ct^{q-1}\quad(t\ge1).
 \end{gathered}
\end{equation}

Apply \eqref{eq:evaluation-representative} to $b=S_{X_G}\varphi$ for
arbitrary $\varphi\in \handrev{\mathscr C}^\infty(X_G)$.  
By smoothness preservation and self-adjointness,
\begin{equation}\label{eq:projected-evaluation}
 (S_{X_G}\varphi)(p)
 =(S_{X_G}\varphi,h)_{X_G}
 =(\varphi,S_{X_G}h)_{X_G},\qquad S_{X_G}h\in 
 \handrev{\mathscr C}^\infty(X_G).
\end{equation}
For $\varphi\in \handrev{\mathscr C}_c^\infty(X_G\setminus\{p\})$, 
off-diagonal
smoothness justifies the ordinary kernel formula.  Comparing it with
\eqref{eq:projected-evaluation} for all such test functions yields
\begin{equation}\label{eq:offdiag-evaluation}
 S_{X_G}(p,y)=\overline{S_{X_G}h(y)},\qquad y\ne p.
\end{equation}
The right side stays bounded as $y\to p$.
To contradict this boundedness, it suffices to find a path 
approaching
$p$ along which the leading term of \eqref{eq:szego-parametrix}
dominates its remainder.  The estimate on $r$ in
\eqref{eq:szego-leading} is used for precisely this comparison.

For the phase in this construction,
$d_y\operatorname{Re}\phi(p,p)$ is a nonzero multiple of the 
contact form, and the quadratic part of 
$\operatorname{Im}\phi(p,y)$ is positive on $H_pX_G$.
Thus $\{y:\operatorname{Re}\phi(p,y)=0\}$ is a smooth 
hypersurface through $p$ tangent to $H_pX_G$.
A smooth curve $y_\varepsilon$ in this hypersurface with $y_0=p$
and nonzero tangent at $p$ satisfies
\begin{equation}\label{eq:phase-path}
 \phi(p,y_\varepsilon)=i\lambda\varepsilon^2+O(\varepsilon^3),
                   \qquad \lambda>0.
\end{equation}
Set $q_\varepsilon=-i\phi(p,y_\varepsilon)$.  For small positive
$\varepsilon$,
$\operatorname{Re}q_\varepsilon\ge c\varepsilon^2$ and
$q_\varepsilon/\varepsilon^2\to\lambda$.  Thus the integral in
\eqref{eq:szego-parametrix} is absolutely convergent at these
off-diagonal points.  The exact Laplace identity gives
\[
 \int_0^\infty e^{-q_\varepsilon t}\chi(t)t^q\,dt
       =q!\,q_\varepsilon^{-q-1}+O(1).
\]
Indeed, the bound on $r$ in \eqref{eq:szego-leading} gives
\[
 \left|\int_0^\infty e^{-q_\varepsilon t}\chi(t)
           r(p,y_\varepsilon,t)\,dt\right|
 \le C\int_1^\infty e^{-c\varepsilon^2t}t^{q-1}\,dt
          =O(\varepsilon^{-2q}).
\]
Consequently
\[
 S_{X_G}(p,y_\varepsilon)
 =q!a_0(p,y_\varepsilon)q_\varepsilon^{-q-1}
                         +O(\varepsilon^{-2q})+O(1),
\]
where the $O(1)$ term includes the low-frequency cutoff correction
and the smooth kernel $R$.  
Since $q_\varepsilon/\varepsilon^2\to\lambda$
and $a_0(p,y_\varepsilon)\to a_0(p,p)\ne0$, the leading term 
has order $\varepsilon^{-2q-2}$.  After multiplication by
$\varepsilon^{2q+2}$, the contribution of $r$ is $O(\varepsilon^2)$
and the bounded terms vanish.  Thus
\begin{equation}\label{eq:szego-blowup}
 \varepsilon^{2q+2}S_{X_G}(p,y_\varepsilon)
       \longrightarrow q!a_0(p,p)\lambda^{-q-1}\ne0.
\end{equation}
This contradicts \eqref{eq:offdiag-evaluation}.
Therefore every point has a conductor element that does not vanish
there.  Their nonvanishing sets cover the compact space $X_G$; a
finite subcover gives the claimed family $c_1,\ldots,c_N$.
\end{proof}

\subsection{Conductor nonvanishing via global CR peak functions}
\label{subsec:peaks}
There is a second way to contradict the smooth representation of
evaluation.  Powers of a global CR peak function at $p$ have a value
of one at $p$, while their $L^2$ norms tend to zero.  We construct such
functions using the filling-theoretic arguments below; this
proof is therefore an alternative to the Szeg\H{o}-kernel argument.
A smooth global CR peak function at $p\in X_G$ is a function
$f_p\in\B$ satisfying
\begin{equation}\label{eq:peak}
 f_p(p)=1,\qquad |f_p(x)|<1\quad(x\ne p).
\end{equation}

\begin{proposition}[Peak functions on the reduction]\label{prop:peaks}
Every point of a compact, globally CR embeddable, strictly pseudoconvex
CR manifold of real dimension at least three has a smooth global CR
peak function.
\end{proposition}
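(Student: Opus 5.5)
Let $Z$ be the compact, globally CR embeddable, strictly pseudoconvex CR manifold of real dimension $2q+1\ge3$, and fix $p\in Z$. The plan is to realize $Z$ as the boundary of a strictly pseudoconvex domain in a Stein space. We then build a peak function there and restrict it to $Z$.

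First, take a CR embedding $Z\hookrightarrow\CC^N$. By the Harvey--Lawson filling theorem, as amended, $Z$ bounds a complex space $V$ with finitely many isolated interior singularities. Near $Z$, the Hill--Nacinovich collar extension gives a one-sided complex manifold collar in which $Z$ is a strictly pseudoconvex boundary. Star\v{c}i\v{c}'s Stein-neighborhood theorem then yields an open Stein space $W\supset\overline V$. In $W$, $\overline V$ is a strictly pseudoconvex compact set with smooth boundary $Z$, and $\overline V$ has a basis of Stein neighborhoods. The singular points lie in the interior, so near $Z$ everything is a manifold.

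Second, we produce a local peak function at $p$. Choose a smooth strictly plurisubharmonic defining function $\rho$ of $V$ near $p$. Its Levi polynomial
\[
P(z)=\sum_j\partial_j\rho(p)(z_j-p_j)+\tfrac12\sum_{j,k}\partial_j\partial_k\rho(p)(z_j-p_j)(z_k-p_k)
\]
satisfies $\operatorname{Re}P(z)\ge c|z-p|^2$ on $\overline V$ near $p$, and it vanishes only at $p$ on $Z$. We then globalize by the standard Henkin/Kerzman/Range argument. Solve a $\bar\partial$-problem on a Stein neighborhood of $\overline V$, using Cartan's Theorem B on $W$ or $L^2$ estimates on a slightly larger strictly pseudoconvex Stein neighborhood. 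This gives a function $F$ holomorphic on a neighborhood of $\overline V$ with $F=P\cdot e^{h}$ locally, or more directly a holomorphic $F$ on a neighborhood of $\overline V$ such that $\operatorname{Re}F>0$ on $\overline V\setminus\{p\}$ and $F(p)=0$. The usual route is to glue $\log P$-type data via a Cousin problem solved on the Stein space. Then
\[
f_p=\exp(-F)\quad\text{or}\quad f_p=\frac{1}{1+F}
\]
restricted to $Z$ is smooth and CR, with $f_p(p)=1$ and $|f_p|<1$ on $Z\setminus\{p\}$. With the second choice, $|1+F|>1$ holds wherever $\operatorname{Re}F>0$.

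The main obstacle is this globalization step: the correction must keep strict positivity of $\operatorname{Re}F$ away from $p$ while retaining smoothness up to $Z$. We would handle it as follows. Take a cutoff $\chi$ near $p$ and set $g=\chi P$, which is smooth with $\bar\partial g$ supported away from $p$. Then solve $\bar\partial u=\bar\partial g$ with $u$ small in sup-norm on a Stein neighborhood $W'$. Smallness is obtained by first replacing $P$ with $\delta P$ for small $\delta$, or, more robustly, by working with $e^{-kP}$ and using that $|e^{-kP}|\le e^{-kc\epsilon^2}$ on the support of $\bar\partial\chi$. Regularity of $u$ near $\overline V$ follows from interior elliptic regularity, since $\bar\partial g$ is smooth and $W'$ contains $\overline V$. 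The resulting $f=\chi e^{-kP}-u$, divided by a constant to normalize $f(p)=1$, is a holomorphic near-peak function. A final modification, such as composing with $(1+f)/2$ or taking a convergent series $\sum 2^{-j}f_j$ of such functions, upgrades it to a strict peak: $|f_p|<1$ off $p$ and $f_p(p)=1$.
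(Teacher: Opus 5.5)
Your filling step (amended Harvey--Lawson, Hill--Nacinovich collar, Stein neighborhoods of $\overline V$) is the paper's, up to two small points. First, with the Levi polynomial as you wrote it, $\operatorname{Re}P\le -c|z-p|^2$ on $\overline V$ near $p$, so you must use $-P$; below, $P$ denotes the corrected polynomial. Second, the filling is applied to the component containing $p$, and the peak function is set to zero on the others.

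The genuine gap is the globalization, which you flag as the main obstacle but do not resolve.
\begin{itemize}
\item A Cousin (Henkin--Ram\'irez type) gluing of $\log P$ gives a global $F$ equal to $Pe^h$ near $p$ and nonvanishing elsewhere. It says nothing about the sign of $\operatorname{Re}F$ away from $p$, which is exactly what $1/(1+F)$ or $e^{-F}$ requires.
\item With $g=\chi P$ the correction cannot be made small. Since $\bar\partial$ is linear, replacing $P$ by $\delta P$ rescales $u$ by the same $\delta$.
\item With $\chi e^{-kP}$ the data $e^{-kP}\bar\partial\chi$ is exponentially small. This still needs control of $\operatorname{Re}P$ on the part of the Stein neighborhood outside $\overline V$, and sup-norm bounds for $\bar\partial$ on a Stein space with singular points; there H\"ormander's $L^2$ method does not apply directly, and Theorem~B alone gives no estimates. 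Even granting this, the result is only a near-peak function. After normalizing $f(p)=1$ one has $|f|\le 1+\eta$ with $\eta>0$. Since the margin $1-|e^{-kP}|$ is only of order $k|z-p|^2$, nothing prevents $|f|>1$ at points arbitrarily close to $p$.
\end{itemize}
Neither proposed upgrade repairs this. The map $w\mapsto(1+w)/2$ sends $\{|w|\le1+\eta\}$ onto a disk containing points of modulus $1+\eta/2$, so it helps only if $|f|\le1$ already. A series $\sum_j2^{-j}f_j$ inherits the excess over $1$ near $p$. It also need not converge in $\mathscr C^\infty$, because the $\mathscr C^k$ norms of $e^{-k_jP}$ blow up. A Bishop-type argument would at best give a merely continuous peak function in the uniform closure, whereas the proposition asks for a smooth one.

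The missing idea, which is the paper's, is to put a \emph{pole} at $p$ instead of a zero. Let $q$ be the negative Levi polynomial. Then $\operatorname{Re}(1/q)=\operatorname{Re}q/|q|^2>0$ on $\overline V\setminus\{p\}$ near $p$, and the form $\bar\partial\chi/q$ is smooth and $\bar\partial$-closed on a Stein neighborhood missing $\supp(d\chi)\cap\{q=0\}$. Theorem~B, with zero local primitives near the singular points, gives $v$ with $\bar\partial v=\bar\partial\chi/q$, smooth and merely \emph{bounded} on $\overline V$. No smallness is needed: near $p$ the pole dominates any bounded error, and elsewhere only a lower bound on the real part is required. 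Thus $H=\chi/q-v$ is holomorphic on $V$, with $\operatorname{Re}(H+\kappa)>0$ on $\overline V\setminus\{p\}$ for $\kappa>\sup_{\overline V}|v|$. Then $f_p=(H+\kappa)/(H+\kappa+1)$ is a strict peak function. It is smooth, indeed holomorphic, at $p$ because $1-f_p=q/(1+(\kappa+1-v)q)$. In your notation this is exactly $f_p=1/(1+F)$ with $F=1/(H+\kappa)$. So the support function with $F(p)=0$ and $\operatorname{Re}F>0$ off $p$ that your argument presupposes arises as the reciprocal of a bounded perturbation of $\chi/q$, not from gluing $P$ or $e^{-kP}$.
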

\begin{proof}
The construction uses an abstract filling with possibly singular
interior.  We apply the amended filling theorem
\cite[Theorem~10.4$'$]{HLadd} to an embedded connected component
containing $p$.  It gives an abstract compact filling $\overline D$
whose interior $D$ is a normal Stein space with finitely many 
singular points, and whose boundary is smooth and CR identified
 with that component.

We apply the collar-extension theorem
\cite[Theorem~3.1]{HillNacinovich} to the smooth part of the filling.
Choose an inner smooth collar of $\partial D$ 
whose closure is disjoint
from the finite singular set.  Glue the collar extension supplied by
\cite[Theorem~3.1]{HillNacinovich} to the original filling along this
inner collar, on which the complex structures agree.  This gives a
complex space $W$ containing $\overline D$ as a compact domain with an
embedded smooth strictly pseudoconvex boundary.
%
Since $D$ is Stein and $\partial D$ is a smooth strictly
pseudoconvex hypersurface in the regular part of $W$, $\overline D$
has a fundamental system of Stein neighborhoods in $W$.  For a Stein
manifold this is \cite[Theorem~4.1]{HillNacinovich}.  Its proof bumps
$\partial D$ outward inside the regular part of $W$ and uses the
Andreotti--Grauert restriction isomorphisms
$H^q(\mathcal N,\mathcal F)\cong H^q(D,\mathcal F)$, $q\ge1$, for
coherent analytic sheaves $\mathcal F$ on the bumped neighborhoods
$\mathcal N$, which hold on complex spaces \cite{AG62}.  So it applies
unchanged here (see also \cite[Theorem~1.1]{Starcic}, with its
auxiliary compact set empty).

Near $p$ the ambient space is a complex manifold.  
A holomorphic support
function $q$ can be chosen with $q(p)=0$ and
$\operatorname{Re}q>0$ on the closed filling near $p$, apart from $p$ itself.  
For example, take the negative Levi polynomial of a locally strictly
plurisubharmonic defining function, after shrinking the coordinate
neighborhood.  Choose a real cutoff $0\le\chi\le1$ supported there
and equal to one near $p$.  Its support avoids the interior singular
points.  The compact set
$\supp(d\chi)\cap\{q=0\}$ is disjoint from $\overline D$.
Choose a Stein neighborhood $\mathcal N$ of $\overline D$ missing
this set.  Then
\[
 \alpha=\frac{\bar\partial\chi}{q},
\]
extended by zero off the coordinate neighborhood, is a smooth
$\bar\partial$-closed $(0,1)$-form on $\mathcal N$, identically
zero near $p$ and near the singular points.

\handrev{To solve $\bar\partial v=\alpha$ on $\mathcal N$, 
use smooth local
primitives on regular coordinate sets and the zero primitive near the
singular points.}  Their differences on overlaps
are holomorphic.  Cartan's Theorem~B for $\OO_{\mathcal N}$ eliminates
the additive Cousin cocycle, giving $v$ smooth near $\overline D$ and
holomorphic near $p$ and the singular points.  In particular $v$ is
bounded on $\overline D$.

The function $H=\chi/q-v$ is holomorphic on $D$ and smooth on
$\overline D\setminus\{p\}$.  Since
$\Re(\chi/q)\ge0$ where it is used, choose a real constant
$\kappa$ so that $\Re(H+\kappa)>0$ on
$\overline D\setminus\{p\}$.  Set
\[
 f_p=\frac{H+\kappa}{H+\kappa+1}.
\]
It has modulus strictly less than one away from $p$, since
$|a+1|^2-|a|^2=2\operatorname{Re}a+1>0$ for $a=H+\kappa$.
Near $p$,
\begin{equation}\label{eq:peak-smooth-at-p}
 1-f_p=\frac{q}{1+(\kappa+1-v)q}.
\end{equation}
The denominator is nonzero near $p$.  Thus $f_p$ extends smoothly,
in fact holomorphically in the local ambient chart, at $p$ with value
one.  Its boundary value is the required CR peak function.  Set it to
zero on any other connected components.  \handrev{The preceding
construction proves the proposition directly.}
\end{proof} 

Proposition~\ref{prop:peaks} can also be established by a direct, 
microlocal construction relying on the Szeg\H{o} kernel 
asymptotic expansion, bypassing the complex filling, 
collar extension, and Stein neighborhood theorems. 
Specifically, one can adapt the method used in 
the proof of \cite[Theorem~1.10]{HM}.
\begin{proof}[Alternative proof of Proposition~\ref{prop:peaks} 
via Toeplitz asymptotics]
Fix $p \in X_G$. Let $dv_{X_G}$ be the volume form 
induced by the Levi form on $X_G$. 
It is not difficult to see that there exists an elliptic first-order 
self-adjoint pseudodifferential operator $P \in L_{cl}^1(X_G)$ 
such that the function $\sigma^0_P(\omega_{0,X_G}(x))$ 
attains a strict global minimum at $p$, where $\sigma^0_P$ 
denotes the principal symbol of $P$. Moreover, we take $P$ so that 
\begin{equation}\label{e-gue261003yyd}
\begin{split}
&d\Bigr(\frac{1}{\sigma^0_P(\omega_{0,X_G}(x))}\Bigr)\Big|_{x=p}=0,\\
&\mbox{${\rm Hess\,}\Bigr(\frac{1}{\sigma^0_P(\omega_{0,X_G}(x))}\Bigr)\Big|_{x=p}$ 
is negative definite}.
\end{split}
\end{equation}

Consider the associated elliptic self-adjoint Toeplitz operator 
$T_P = S_{X_G} P S_{X_G}$ on $L^2(X_G)$, 
where $S_{X_G}: L^2(X_G) \to H_b^0(X_G)$ is the Szeg\H{o} projector. 
Let $0 < \lambda_1 \le \lambda_2 \le \dots \to +\infty$ 
be the sequence of positive eigenvalues of $T_P$, and let 
$\{f_j\}_{j=1}^\infty \subset \mathcal{B} = 
\mathscr{C}^\infty(X_G) \cap H_b^0(X_G)$ 
be a corresponding orthonormal system of smooth CR eigenfunctions.

Fix a non-negative cut-off function 
$\eta \in \mathscr{C}_c^\infty((0, \infty))$ with $\eta \not\equiv 0$. 
For $k > 0$, consider the localized spectral kernel
\[
\eta_k(T_P)(x, y) = \sum_{j=1}^\infty \eta(k^{-1}\lambda_j) 
f_j(x)\overline{f_j(y)} \in \mathscr{C}^\infty(X_G \times X_G).
\]
By the diagonal asymptotic expansion \cite[Corollary~1.2]{HHMS}, as $k \to +\infty$,
\[
\eta_k(T_P)(x, x) = k^{q+1} \mathcal{A}_0(x) + O(k^q), \quad q = n-d,
\]
where the leading coefficient is given by
\[
\mathcal{A}_0(x) = \frac{1}{2\pi^{q+1}} 
\frac{1}{\sigma^0_P(\omega_{0,X_G}(x))^{q+1}} 
\int_0^\infty \eta(t) t^q \, dt = 
\frac{C_\eta}{\sigma^0_P(\omega_{0,X_G}(x))^{q+1}}, \quad C_\eta > 0.
\]
We work on local coordinates $x=(x_1,\ldots,x_{2q+1})$ 
defined near $p$ with $x(p)=0$. Near $p$, consider Taylor expansion 
of $\eta_k(T_P)(x, x)$ at $x=p$: From \eqref{e-gue261003yyd}, we have 
\begin{equation}\label{e-gue261004yydI}\eta_k(T_P)(x,x)-\eta_k(T_P)(p,p)
\leq -ck^{q+1}|x-p|^2+O(k^{q}).\end{equation}
Fix $0<\varepsilon\ll1$. From \eqref{e-gue261004yydI}, 
we see that if $\varepsilon>0$ is small enough, then 
\begin{equation}\label{e-gue261004yydII}
\eta_k(T_P)(x,x)<\eta_k(p,p),\ \ \mbox{for all $|x-p|\leq 
k^{-\frac{1}{2}+\varepsilon}$, $x\neq p$}.
\end{equation}
Form the off-diagonal expansion in~\cite[Theorem 1.1]{HHMS}, 
it is straightforward to see that 
\begin{equation}\label{e-gue261004ycdr}
\eta_k(T_P)(x,p)=O(k^{-\infty})\ \ \mbox{on $|x-p|>k^{-\frac{1}{2}+\varepsilon}$}.
\end{equation}

Fix a sufficiently large $k$, and define the global smooth CR function
\[
f_p(x) := \frac{\eta_k(T_P)(x, p)}{\eta_k(T_P)(p, p)} = 
\frac{\sum_{j=1}^\infty \eta(k^{-1}\lambda_j) 
\overline{f_j(p)} f_j(x)}{\sum_{j=1}^\infty 
\eta(k^{-1}\lambda_j) |f_j(p)|^2} \in \mathcal{B}.
\]
From Cauchy inequalities, we have 
\begin{equation}\label{e-gue261004ycdm}
|\eta_k(T_P)(x,p)|^2\leq|\eta_k(T_P)(x,x)||\eta_k(T_P)(p,p)|.
\end{equation}
From \eqref{e-gue261004yydII}, \eqref{e-gue261004ycdr}
and \eqref{e-gue261004ycdm}, we get $f_p(p) = 1$
and $|f_p(x)| < 1$ for all $x\in X_G$, $x\neq p$. 
Thus, $f_p$ is the desired smooth global CR peak function.
\end{proof}
\begin{proof}[Second proof of Lemma~\ref{lem:nocommon}]
Suppose every $c\in\I$ vanishes at a common point
$p\in X_G$.  Then
\eqref{eq:evaluation-representative} supplies a smooth $h$
representing evaluation at $p$.  Choose $f_p$ from
Proposition~\ref{prop:peaks}. The powers $f_p^\nu$ belong to $\B$,
are bounded by one, and tend to zero away from $p$. A point has zero
smooth volume, so dominated convergence gives
$\norm{f_p^\nu}_{X_G,0}\to0$.  But
\[
 1=f_p^\nu(p)=(f_p^\nu,h)_{X_G},\qquad
 |(f_p^\nu,h)_{X_G}|
 \le\norm{f_p^\nu}_{X_G,0}\norm h_{X_G,0}\longrightarrow0,
\]
a contradiction.  Thus no point is a common zero of the conductor;
compactness gives a finite nonvanishing family as above.
\end{proof}

\subsection{Local complex quotients and BRT coordinates}
\label{sec:local}
\leavevmode\structrev{We recall BRT coordinates, which give a local
complex quotient on which invariant CR functions become holomorphic.
Away from $Y$, a suitable infinitesimal group direction is locally
transverse to the CR distribution, even when the full $G$-orbit space
is not smooth.}

\begin{lemma}[Local BRT normal form]\label{lem:BRT}
Let $V_0$ be a smooth real vector field on a neighborhood $U$
of $x_0$ in a CR manifold of hypersurface type.  Suppose
\[
 TX=HX\oplus\RR V_0,\qquad
 [V_0,{\mathscr C}^\infty(U,T^{1,0}X)]\subset
 {\mathscr C}^\infty(U,T^{1,0}X).
\]
There are coordinates $(z,\theta)$ near $x_0$, with
$z\in V\subset\CC^n$ and $\theta\in(-\varepsilon,\varepsilon)$,
in which
\begin{equation}\label{eq:BRT}
 V_0=\partial_\theta,\qquad
 Z_j=\partial_{z_j}+i\varphi_{z_j}(z,\bar z)\partial_\theta,
       \quad 1\le j\le n,
\end{equation}
form a frame of $T^{1,0}X$.  Here $\varphi$ is smooth and 
real-valued, independent of $\theta$.
\end{lemma}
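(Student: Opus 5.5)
We follow the classical Baouendi--Rothschild--Tr\`eves argument.

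\emph{Straightening $V_0$.} Since $V_0(x_0)\neq0$, the flow-box theorem gives coordinates $(x',\theta)$ near $x_0$, with $x'\in\RR^{2n}$, in which $V_0=\partial_\theta$. Let $\Sigma=\{\theta=0\}$. Because $TX=HX\oplus\RR V_0$, the hypersurface $\Sigma$ is transverse to $V_0$, so the projection $\CC TX\to\CC T\Sigma$ along $V_0$ is defined.

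\emph{A complex structure on $\Sigma$.} The bracket condition says that $T^{1,0}X$, and hence $T^{0,1}X$, is invariant under the flow of $V_0$. Consequently the projected bundle
\[
\mathcal E=\{Z-\omega(Z)V_0/\omega(V_0):Z\in T^{1,0}X\}\subset\CC T\Sigma,
\]
where $\omega$ is a characteristic form, is independent of $\theta$. Since $\omega(V_0)\neq0$, the projection is injective on $T^{1,0}\oplus T^{0,1}$. Therefore $\mathcal E\cap\overline{\mathcal E}=0$ and $\mathcal E\oplus\overline{\mathcal E}=\CC T\Sigma$, so $\mathcal E$ is an almost complex structure on $\Sigma$.

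For integrability, take $L_1,L_2$ to be $\theta$-invariant sections of $T^{1,0}X$. They can be chosen this way by flowing a local frame, since the flow preserves the bundle. Write their projections as $L_j-a_jV_0$ with $a_j$ independent of $\theta$. Their bracket equals $[L_1,L_2]$ plus terms involving $V_0$, because $[V_0,L_j]=0$ and $V_0a_j=0$. Since $[L_1,L_2]\in T^{1,0}X$, the projection of $[L_1,L_2]$ lies in $\mathcal E$, and the bracket of the projections is tangent to $\Sigma$. So $\mathcal E$ is involutive. Newlander--Nirenberg then gives holomorphic coordinates $z$ on $\Sigma$ with $\mathcal E$ spanned by the $\partial_{z_j}$. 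Extending $z$ to be $\theta$-independent, we get $T^{1,0}X=\operatorname{span}\{\partial_{z_j}+b_j(z)\partial_\theta\}$ with $b_j$ independent of $\theta$.

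\emph{The potential $\varphi$.} Involutivity of $T^{1,0}X$ gives $\partial_{z_k}b_j=\partial_{z_j}b_k$. Applying the same involutivity to $T^{0,1}X=\operatorname{span}\{\partial_{\bar z_j}+\overline{b_j}\partial_\theta\}$ would not help, so we argue differently. Since $V_0$ is real, $[Z_j,\overline Z_k]$ must lie in $\CC HX$ modulo $\partial_\theta$. This is automatic, but we need more: we need $b_j=i\varphi_{z_j}$ with $\varphi$ real.

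From $\partial_{z_k}b_j=\partial_{z_j}b_k$, the form $\sum b_j\,dz_j$ is $\partial$-closed of type $(1,0)$ on a polydisc. Hence $b_j=\partial_{z_j}g$ for some complex $g$, which can be obtained by integrating along $z$ with $\bar z$ as a parameter. Write $g=i\varphi+h$ where $\varphi=\operatorname{Im}(-ig)$ is real. Then $h=g-i\operatorname{Re}(-ig)$ is real up to $i$, and $\partial_{z_j}g=i\varphi_{z_j}+\partial_{z_j}h$.

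The natural move is the coordinate change $\theta\mapsto\theta+\psi(z,\bar z)$ with $\psi$ real, which keeps $V_0=\partial_\theta$. It sends $b_j$ to $b_j-\psi_{z_j}$. We need a real $\psi$ such that $b_j-\psi_{z_j}$ is $i$ times the $z_j$-derivative of a real function. Equivalently, $\beta=\sum b_j\,dz_j$ should equal $\partial(\psi+i\varphi)$ with $\psi,\varphi$ real. Any $\partial$-closed $(1,0)$ form is $\partial u$ for a complex $u$, and we may set $\psi=\operatorname{Re}u$ and $\varphi=\operatorname{Im}u$. In the new coordinate $\theta'=\theta-\psi$ the frame becomes $\partial_{z_j}+i\varphi_{z_j}\partial_{\theta'}$, which is \eqref{eq:BRT}.

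The main obstacle is solving $\partial u=\beta$ when $\beta$ depends on $\bar z$. We handle it by solving $\bar\partial\bar u=\bar\beta$ on a polydisc, using the $\bar\partial$-Poincaré lemma, which applies because $\bar\partial\bar\beta=0$. Smoothness follows from the standard Dolbeault lemma after shrinking the domain.
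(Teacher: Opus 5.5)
Your argument is essentially the paper's proof: you straighten $V_0$, transport the complex structure to the transversal by projecting along $V_0$ (the paper's $dp_0|_{HX}$), check integrability with flow-invariant lifts, apply Newlander--Nirenberg, observe that $\sum_j b_j\,dz_j$ is $\partial$-closed, solve $\partial u=\beta$ by the conjugate Dolbeault lemma, and shift $\theta$ by $\operatorname{Re}u$. One slip to fix: the projection onto $\CC T\Sigma$ along $V_0$ is $Z\mapsto Z-d\theta(Z)\,\partial_\theta$, not $Z-\omega(Z)V_0/\omega(V_0)$, which is the identity on $T^{1,0}X\subset\CC HX$. Your later steps (the injectivity argument via $\omega(V_0)\neq0$ and writing the projections as $L_j-a_jV_0$) already use the correct projection, and the aside about integrating in $z$ with $\bar z$ as a parameter should be dropped in favour of your Dolbeault-lemma argument.
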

\handrev{These are the BRT coordinates \cite{BRT}, in the sign 
convention of \cite[Theorem~3.2, (3.14)--(3.17)]{HHtorsion}; 
the proof below establishes the local version needed here.}

\begin{proof}
A flow box identifies a neighborhood with $W\times I$, where $W$ is
initially a real $2n$-manifold, $V_0=\partial_t$, and
$p_0(y,t)=y$.  The restriction $dp_0|_{HX}$ is a smooth real bundle
isomorphism.  Denote its inverse on each fiber by $\ell_x$.
\handrev{Write $\Phi_\tau=\Phi^{V_0}_\tau$ for the local flow of
$V_0$.}  Then
\[
 p_0\Phi_\tau=p_0,\quad d\Phi_\tau(HX)=HX,\quad
 d\Phi_\tau J=Jd\Phi_\tau,\quad
 \ell_{\Phi_\tau(x)}=d\Phi_\tau\ell_x.
\]
The last identity follows from uniqueness of the horizontal lift.
Consequently
\[
 J_Wv=dp_{0,x}J_x\ell_x(v),\qquad p_0(x)=y,
\]
is independent of the point $x$ on the local fiber.  It is smooth and
squares to $-I$.

Let $W_1,W_2$ be $(1,0)$ fields for $J_W$, and lift them horizontally
to $V_1,V_2\in T^{1,0}X$.  They are $p_0$-related to $W_1,W_2$.
For every smooth function $f$ on $W$,
\[
 [V_1,V_2](f\circ p_0)=([W_1,W_2]f)\circ p_0.
\]
Thus $dp_0[V_1,V_2]=[W_1,W_2]\circ p_0$.  
The CR involutivity upstairs
proves involutivity of $T^{1,0}W$.  The smooth Newlander--Nirenberg
theorem \cite{NN} supplies local complex coordinates $z$ on $W$.

The unique horizontal lifts of $\partial_{z_j}$ now have the form
\[
 L_j=\partial_{z_j}+\beta_j(z,\bar z)\partial_t.
\]
Their coefficients are independent of $t$, since the flow preserves
horizontal lifts.  Their brackets are both of type $(1,0)$ and
multiples of the transverse vector $\partial_t$:
\[
 [L_j,L_k]
 =\bigl(\partial_{z_j}\beta_k-\partial_{z_k}\beta_j\bigr)\partial_t.
\]
Transversality forces these brackets to vanish.  Hence
$\beta=\sum_j\beta_jdz_j$ is $\partial$-closed.  The local Dolbeault
lemma \cite[Corollary~B.4.3]{MaMarinescu}, or its complex conjugate,
gives a smooth
complex-valued function $h$ with $\partial h=\beta$.  Write
$h=a+i\varphi$ with $a,\varphi$ real and replace $t$ by
$\theta=t-a(z,\bar z)$.  Since $\partial_t=\partial_\theta$ and
$L_j\theta=\beta_j-\partial_{z_j}a=i\varphi_{z_j}$, this gives
\eqref{eq:BRT}.  After shrinking once more, 
the coordinate domain is a
product $V\times(-\varepsilon,\varepsilon)$ with $V$ connected.
\end{proof}

\begin{lemma}[Local complex quotient]\label{lem:quotient}
For every $x_0\in X\setminus Y$, there are a neighborhood
$U\subset X\setminus Y$, a connected complex domain $V\subset\CC^n$,
and a smooth surjective submersion $p:U\to V$ with connected
one-dimensional fibers such that
\begin{equation}\label{eq:pCR}
 dp(T^{0,1}X|_U)=T^{0,1}V.
\end{equation}
Every $u\in\C$, restricted to $U$, is the pullback of a unique
holomorphic function on $V$.  Conversely, every holomorphic
function on $V$ pulls back to a smooth CR function on $U$.
\end{lemma}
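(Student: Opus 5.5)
The plan is to produce a transverse group direction at $x_0$ and then apply Lemma~\ref{lem:BRT}.

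\textbf{Step 1: a transverse infinitesimal generator.} Since $x_0\notin Y$, we have $\mu(x_0)\ne0$. By \eqref{eq:moment-convention} there is $\xi\in\mathfrak g$ with
\[
 \omega_0(\xi_X(x_0))=\langle\mu(x_0),\xi\rangle\ne0 .
\]
Set $V_0=\xi_X$. By continuity, $\omega_0(V_0)\ne0$ on a neighborhood of $x_0$, so $TX=HX\oplus\RR V_0$ there.

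\textbf{Step 2: $V_0$ preserves $T^{1,0}X$.} The flow of $V_0$ is $t\mapsto\exp(-t\xi)$ acting on $X$. This action preserves $J$, hence preserves $T^{1,0}X$. Therefore the Lie derivative $[V_0,\cdot\,]$ maps sections of $T^{1,0}X$ to sections of $T^{1,0}X$.

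\textbf{Step 3: coordinates and the map $p$.} Lemma~\ref{lem:BRT} now applies and gives coordinates $(z,\theta)$ on $U=V\times(-\varepsilon,\varepsilon)$ with $V$ connected. Shrink $U$ so that it lies inside the open set $X\setminus Y$. Define $p(z,\theta)=z$. Then $p$ is a smooth surjective submersion whose fibers are intervals, hence connected and one-dimensional.

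The frame $\overline Z_j=\partial_{\bar z_j}-i\varphi_{\bar z_j}\partial_\theta$ spans $T^{0,1}X$, and $dp(\overline Z_j)=\partial_{\bar z_j}$. This gives \eqref{eq:pCR}.

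\textbf{Step 4: invariant CR functions descend.} Take $u\in\C$. Invariance under the one-parameter subgroup $\exp(-t\xi)\subset G$ gives $V_0u=\partial_\theta u=0$. So $u=f\circ p$ for a unique smooth function $f$ on $V$; uniqueness uses that $p$ is surjective.

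The CR equation gives
\[
 0=\overline Z_ju=\partial_{\bar z_j}f ,
\]
since the $\partial_\theta$ term vanishes on $u$. Hence $f$ is holomorphic.

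\textbf{Step 5: the converse.} If $f$ is holomorphic on $V$, then $\overline Z_j(f\circ p)=\partial_{\bar z_j}f=0$. So $f\circ p$ is a smooth CR function on $U$.

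\textbf{Main obstacle.} The only delicate point is Step 2, confirming that $G$-invariance of $J$ yields the bracket condition required by Lemma~\ref{lem:BRT}. It does, because $d\Phi_t$ preserves $T^{1,0}X$ for every $t$. Note that only invariance under the single subgroup generated by $\xi$ is used, so smoothness of the full orbit space is never needed.
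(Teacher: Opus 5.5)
Your proposal is correct and follows the paper's proof essentially step for step: you choose $\xi$ with $\langle\mu(x_0),\xi\rangle\ne0$, apply Lemma~\ref{lem:BRT} to the CR-flow field $\xi_X$, and take $p(z,\theta)=z$; then \eqref{eq:pCR}, the descent of invariant CR functions, and the converse all follow from the conjugate frame $\overline Z_j=\partial_{\bar z_j}-i\varphi_{\bar z_j}\partial_\theta$. Your extra step of shrinking $U$ into $X\setminus Y$ is harmless but unnecessary, because transversality already places the BRT chart inside $\{\langle\mu,\xi\rangle\ne0\}\subset X\setminus Y$.
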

\begin{proof}
Since $\mu(x_0)\ne0$, choose a fixed $\xi\in\mathfrak g$ with
$a(x_0)=\omega_0(\xi_X)(x_0)\ne0$.  Set $V_\xi=\xi_X$.
The field is transverse
on the open set $\{a\ne0\}$, and its flow
consists of CR transformations.  
\handrev{Because $HX=\ker\omega_0$ and $a(x_0)\ne0$, we have
$TX=HX\oplus\RR V_\xi$ near $x_0$.} 
Lemma~\ref{lem:BRT} applies to
$V_\xi$, giving \eqref{eq:BRT}.  Define $p(z,\theta)=z$.
Then $\ker dp=\RR V_\xi$, and complex conjugation gives
\begin{equation}\label{eq:BRT-bar}
 \overline Z_j=\partial_{\bar z_j}
                  -i\varphi_{\bar z_j}\partial_\theta,
 \qquad dp(\overline Z_j)=\partial_{\bar z_j}.
\end{equation}
This proves \eqref{eq:pCR}.  If $u\in\C$, its invariance implies
$V_\xi u=0$, so $u(z,\theta)=\widetilde u(z)$ on the connected
flow fibers.  The CR equations become
\[
 0=\overline Z_ju
   =\partial_{\bar z_j}\widetilde u.
\]
Thus $\widetilde u\in\OO(V)$.  Surjectivity of $p$ gives uniqueness,
and \eqref{eq:BRT-bar} proves the converse.
\end{proof}

For the field $V_\xi$ chosen in the proof of
Lemma~\ref{lem:quotient},
\begin{equation}\label{eq:rigid-CR}
 p^*\OO(V)=\{u\in \handrev{\mathscr C}^\infty(U):V_\xi u=0,\ 
 \dbarb u=0\}.
\end{equation}
 There is only an inclusion $\C|_U\subset p^*\OO(V)$, not generally
 equality: invariance under one local flow does not imply full
 $G$-invariance or global extendability.  
 The slice in a flow box is not assumed tangent to $HX$; 
 the complex structure is transported through
 $dp_0|_{HX}$, not obtained by restricting $J$ to a slice.
 For a fixed $x_0$ all the invariant CR functions used later 
 descend on
 the \emph{same} domain $V$.  These observations apply without any
 Levi positivity outside $Y$ and without a global quotient by $G$.
\structrev{In Section~\ref{sec:surjectivity}, we combine conductor
denominators with these local complex quotients to construct smooth
lifts of reduced CR functions.}

\begin{remark}[Invariant CR distributions away from $Y$]
\label{rem:invariant-away}
Every $G$-invariant distribution $u\in\mathcal D'(X)$ satisfying
$\dbarb u=0$ is smooth on $X\setminus Y$.
Indeed, in the product coordinates $U\simeq V\times
(-\varepsilon,\varepsilon)$ of Lemma~\ref{lem:quotient}, invariance
gives $\partial_\theta u=0$ distributionally.  Hence
$u|_U=\widetilde u\otimes1$ for a distribution
$\widetilde u\in\mathcal D'(V)$, and \eqref{eq:BRT-bar} reduces
the CR equations to $\partial_{\bar z_j}\widetilde u=0$ for every
$j$.  Elliptic regularity makes $\widetilde u$ smooth and holomorphic.
This argument uses only the transverse infinitesimal group direction
available where $\mu\ne0$ and the CR invariance of its flow; it
requires neither Levi positivity nor closed range.
\end{remark}

\subsection{Smooth surjectivity of $\sigma_G$}
\label{sec:surjectivity}

Finite codimension first imposes an algebraic constraint on each
$b\in\B$.  The resulting monic relation will later bound the local
quotients used to construct its lift.

\begin{lemma}\label{lem:integrality}
Every $b\in\B$ satisfies a monic polynomial with coefficients in
$\A$, of degree at most $r+1$:
\begin{equation}\label{eq:monic}
 b^\ell+\sum_{k=0}^{\ell-1}a_kb^k=0,
 \qquad a_k\in\A,\quad 1\le\ell\le r+1.
\end{equation}
\end{lemma}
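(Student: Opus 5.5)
The plan is the standard Cayley--Hamilton argument for a finitely generated module over a commutative ring, applied to $\B$ viewed as an $\A$-module. By Lemma~\ref{lem:smoothrange}, $\B=\A\oplus F$ with $\dim_\CC F=r$. Hence the $r+1$ elements $b_0=1,b_1=e_1,\ldots,b_r=e_r$ generate $\B$ as an $\A$-module: every $f\in\B$ can be written as $a+\sum_j\gamma_je_j$ with $a\in\A$ and $\gamma_j\in\CC\subset\A$, since $\A$ is unital.

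Fix $b\in\B$. For each $j$, the product $bb_j$ lies in $\B$. So we can write
\[
 bb_j=\sum_{k=0}^r m_{jk}b_k,\qquad m_{jk}\in\A.
\]
These coefficients exist, although they need not be unique. Let $M=(m_{jk})$ be the resulting $(r+1)\times(r+1)$ matrix over the commutative ring $\A$. Then $(bI-M)\mathbf b=0$, where $\mathbf b=(b_0,\ldots,b_r)^T$ and the matrix $bI-M$ has entries in the commutative ring $\B$. Multiplying on the left by the adjugate gives $\det(bI-M)\,b_k=0$ for every $k$. Taking $k=0$, where $b_0=1$, yields $\det(bI-M)=0$ in $\B$.

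Expanding the determinant, we get $\det(tI-M)=t^{r+1}+\sum_{k=0}^{r}a_kt^k$. Each $a_k$ is a polynomial in the entries $m_{jk}$, so $a_k\in\A$. Substituting $t=b$ gives \eqref{eq:monic} with $\ell=r+1$. The statement asks only for some $1\le\ell\le r+1$, so this choice of $\ell$ suffices. When $r=0$ the relation reads $b-m_{00}=0$ with $m_{00}\in\A$, which is consistent with $\A=\B$.

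There is no analytic obstacle. The one point to check is that the determinant trick is carried out inside the commutative ring $\B$, which contains $\A$, so that the adjugate identity is valid there. This holds because $\B$ is a commutative algebra under pointwise multiplication of smooth functions. All the analysis has already been absorbed into the finite codimension $r$ from Lemma~\ref{lem:smoothrange}.
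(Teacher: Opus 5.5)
Your proof is correct, but it takes a different route from the paper. The paper works directly in the $r$-dimensional complex vector space $\B/\A$. There the $r+1$ classes $[b],[b^2],\ldots,[b^{r+1}]$ are linearly dependent. Normalizing the top nonzero coefficient of a dependence relation gives $b^\ell+\sum_{k=1}^{\ell-1}\gamma_kb^k=a\in\A$ with $\gamma_k\in\CC$, so one takes $a_0=-a$ and $a_k=\gamma_k\cdot 1$.

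You instead observe that $1,e_1,\ldots,e_r$ generate $\B$ as an $\A$-module, and you apply the determinant (Cayley--Hamilton) trick inside the commutative ring $\B$. This is valid: the coefficients $m_{jk}$ lie in $\A$ because $\CC\subset\A$, and $\A$ is a subring, so the coefficients of $\det(tI-M)$ lie in $\A$.

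Your argument is the general commutative-algebra fact that a module-finite ring extension is integral. It does not need the quotient to be a vector space over a field contained in $\A$. The paper's argument is more elementary, since it uses no adjugate, only $\CC\subset\A$ and $\dim_\CC\B/\A=r$. It also gives a sharper relation: every coefficient except $a_0$ is a constant, and the degree $\ell$ may be smaller than $r+1$.

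For the later use, the bound on $F_0$ in the proof of Proposition~\ref{prop:surjective}, only some monic relation with coefficients in $\A$ is needed. Either relation suffices.
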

\begin{proof}
In the $r$-dimensional complex vector space $\B/\A$, the $r+1$
vectors $[b],\ldots,[b^{r+1}]$ are dependent.  Take a nontrivial
relation and let $\ell$ be its highest index with nonzero coefficient.
Normalize that coefficient to one.  Then
$b^\ell+\sum_{k=1}^{\ell-1}\gamma_kb^k=a\in\A$, with
$\gamma_k\in\CC$.  Since $\CC1\subset\A$, set $a_0=-a$ and
$a_k=\gamma_k1$ for $k\ge1$.  This proves \eqref{eq:monic}.
\end{proof}

Choose $c_1,\ldots,c_N\in\I$ with no common zero as in
Lemma~\ref{lem:nocommon}.  Fix $b\in\B$ and a relation
\eqref{eq:monic}.  Since $c_j$, $c_jb$, and $a_k$ belong to $\A$,
there are unique lifts
\begin{equation}\label{eq:lifts}
 C_j=\sigma_G^{-1}(c_j),\qquad
 B_j=\sigma_G^{-1}(c_jb),\qquad
 A_k=\sigma_G^{-1}(a_k),
\end{equation}
Multiplicativity and injectivity of $\sigma_G$ 
(cf.\ Proposition \ref{prop:injective}) give the
identities
\begin{equation}\label{eq:cross}
\begin{split}
 &C_iB_j=C_jB_i,\\
 &B_j^\ell+\sum_{k=0}^{\ell-1}A_kB_j^kC_j^{\ell-k}=0
                        \quad\text{on }X.
\end{split}
\end{equation}
For the second identity apply $\sigma_G$ and obtain $c_j^\ell$
times \eqref{eq:monic}.
The set
\begin{equation}\label{eq:K}
 K=\{x\in X:C_1(x)=\cdots=C_N(x)=0\}
\end{equation}
is closed and $G$-invariant.  The restriction formulas
$C_j|_Y=\pi^*c_j$ show that $K\cap Y=\varnothing$.  Equation
\eqref{eq:cross} permits the definition
\begin{equation}\label{eq:quotient-u}
 u=B_j/C_j\quad\text{where }C_j\ne0.
\end{equation}
It is well defined, smooth, invariant, and CR on $X\setminus K$.
The quotient rule proves the CR condition, and
$B_j|_Y=\pi^*(c_jb)$ shows $u|_Y=\pi^*b$.

\begin{lemma}[Empty interior]\label{lem:empty}
$\Int_XK=\varnothing$.
\end{lemma}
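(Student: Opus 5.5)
We argue by contradiction: assume $K$ contains a nonempty open set $W\subset X$. Since $K\cap Y=\varnothing$, we may shrink $W$ so that it lies in $X\setminus Y$ and inside a domain $U$ of the local complex quotient of Lemma~\ref{lem:quotient}. The aim is to show that each $C_j$ vanishes identically on $X$. That contradicts $C_j|_Y=\pi^*c_j$, because the $c_j$ have no common zero on $X_G$ and $Y\ne\varnothing$.

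The first step is a local identity principle on $X\setminus Y$. Around any point $x_1\in X\setminus Y$, Lemma~\ref{lem:quotient} gives $p:U\to V$ with $V$ connected, and $C_j|_U=p^*\widetilde C_j$ with $\widetilde C_j\in\OO(V)$. If $C_j$ vanishes on a nonempty open subset $W\cap U$, then $\widetilde C_j$ vanishes on the open set $p(W\cap U)$, which is open because $p$ is a submersion. The identity theorem then gives $\widetilde C_j\equiv0$ on $V$, and since $p$ is surjective, $C_j\equiv0$ on $U$. Consequently, the set $\Omega_j=\Int\{C_j=0\}\cap(X\setminus Y)$ is open and also closed in $X\setminus Y$. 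So $C_j$ vanishes identically on every connected component of $X\setminus Y$ that meets $W$.

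The second step carries the vanishing up to $Y$, and this is the main obstacle: $X\setminus Y$ may be disconnected, and the local quotients are unavailable at $Y$. Let $\Sigma$ be a component of $X\setminus Y$ on which $C_j\equiv0$. Because $X$ is connected and $Y\neq\varnothing$ is a closed submanifold of positive codimension, $\overline\Sigma$ meets $Y$. Continuity then gives $C_j=0$ at some points of $Y$. This alone is not enough, so we need unique continuation across $Y$. My first attempt uses the Levi positivity near $Y$ from Assumption~\ref{ass:HMM}. Near a point $y_0\in\overline\Sigma\cap Y$, the strictly pseudoconvex CR structure is locally embeddable: in dimension $\ge5$ this holds near strictly pseudoconvex points by Kuranishi/Akahori, and in dimension three the global hypothesis of alternative~(ii) applies. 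Moreover, smooth CR functions on a strictly pseudoconvex hypersurface extend holomorphically to the pseudoconvex side (Lewy extension). A CR function that vanishes on an open subset of a connected strictly pseudoconvex hypersurface has holomorphic extension vanishing on an open set, and therefore vanishes on the whole connected local piece. Hence $C_j$ vanishes on a neighborhood of $y_0$, which meets $Y$ in an open set of $Y$. Since $C_j|_Y=\pi^*c_j$ is constant along orbits, $c_j$ vanishes on an open subset of $X_G$. Unique continuation on the connected components of $X_G$ then follows from the same Lewy extension argument, using that $X_G$ is strictly pseudoconvex.

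The final step handles all the $c_j$ together and the possible disconnectedness of $X_G$. If $X_G$ were disconnected, I would avoid using connectedness by running the argument above on a neighborhood of $Y$, where Levi positivity holds, inside connected components of that neighborhood. In that region each $C_j$ is locally the boundary value of a holomorphic function, so the zero set of $C_j$ has empty interior unless $C_j$ vanishes on the whole component. The points $y_0$ reached from $W$ lie in a single component $Y_0$ of $Y$. Applying the argument to every $j$ shows that all the $c_j$ vanish on $\pi(Y_0)$, contradicting Lemma~\ref{lem:nocommon}. Therefore $\Int_XK=\varnothing$.
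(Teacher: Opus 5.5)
Your first step is correct, and it is the same local identity-theorem argument the paper uses. The problem is your second step, which is unnecessary and, as written, unsupported.

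You dismiss the continuity observation at a point of $\overline\Sigma\cap Y$ as ``not enough'' because you track the zero sets of the $C_j$ one at a time. Instead, take $\Sigma$ to be one component of $X\setminus Y$ meeting $W$. Since $W\subset K$, every $C_j$ vanishes on the nonempty open set $W\cap\Sigma$. By your first step, every $C_j$ then vanishes on all of $\Sigma$. By continuity, every $C_j$ vanishes at one and the same point $y_0\in\overline\Sigma\cap Y$. Thus $y_0\in K\cap Y$, contradicting $K\cap Y=\varnothing$, which is exactly the no-common-zero property of $c_1,\ldots,c_N$.

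The paper states this even more directly. Since $K$ is closed, $\overline{\Int_XK}\subset K\subset X\setminus Y$, so Lemma~\ref{lem:quotient} is available at every point of the closure of $O=\Int_XK$. The identity theorem then shows that $O$ is closed. Connectedness of $X$, together with $O\cap Y=\varnothing$ and $Y\neq\varnothing$, forces $O=\varnothing$. No unique continuation across $Y$ is needed, and nothing about $X_G$ is needed.

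The detour you substituted is not harmless. It relies on local CR embeddability near strictly pseudoconvex points in real dimension $\ge5$. However, Kuranishi's local embedding theorem and its improvement by Akahori cover only real dimension $\ge7$; the five-dimensional case is a well-known open problem. Assumption~\ref{ass:HMM}(i) allows $\dim_{\RR}X=5$ and deliberately imposes no embeddability on $X$. So when $\dim_{\RR}X=5$, your Lewy-extension step has no ambient complex manifold to work in, and the argument as written has a gap there. Replace your second and third steps by the simultaneous continuity argument above, and the proof is complete.
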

\begin{proof}
Set $O=\Int_XK$ and take $x\in\overline{O}$, the closure of $O$. 
Since $K$ is closed,
\begin{equation}\label{eq:closure-in-K}
 \overline{O}\subset K\subset X\setminus Y.
\end{equation}
Use Lemma~\ref{lem:quotient} at $x$ to obtain $p:U\to V$ with
connected $V$.  All the $C_j$ descend to holomorphic functions
$\widetilde C_j$ on $V$.  As $x\in\overline{O}$, the set $U\cap O$
is nonempty and open.  A smooth submersion is open, so $p(U\cap O)$
is a nonempty open subset of $V$.  Every $\widetilde C_j$ vanishes
there, hence on all of $V$ by the holomorphic identity theorem.  Thus
$U\subset K$, so $x\in O$.  Hence $\overline{O}\subset O$ and $O$ is
closed in $X$.
Connectedness of $X$, nonemptiness of $Y$, and $K\cap Y=\varnothing$
exclude $O=X$.  Therefore $O=\varnothing$.
\end{proof}

\begin{proposition}[Smooth surjectivity]\label{prop:surjective}
The canonical map $\sigma_G:\C\to\B$ is surjective.
\end{proposition}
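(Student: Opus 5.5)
The proof extends the function $u$ of \eqref{eq:quotient-u} from $X\setminus K$ to all of $X$ as a smooth invariant CR function. Then $u\in\C$ and $\sigma_Gu=b$, because $u|_Y=\pi^*b$ and $K\cap Y=\varnothing$. Extension is a local problem near points of $K$, and $K\subset X\setminus Y$. So fix $x_0\in K$ and take the local complex quotient $p:U\to V$ of Lemma~\ref{lem:quotient}. All the lifts $C_j$, $B_j$, $A_k$ lie in $\C$, so they descend to holomorphic functions $\widetilde C_j,\widetilde B_j,\widetilde A_k$ on the same connected domain $V$. The set $\widetilde K=\{\widetilde C_1=\cdots=\widetilde C_N=0\}$ satisfies $p^{-1}(\widetilde K)=K\cap U$, and it is an analytic subset of $V$. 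By Lemma~\ref{lem:empty} and openness of $p$, $\widetilde K$ has empty interior, so it is a proper analytic subset of the connected domain $V$.

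The quotient descends too. On $V\setminus\widetilde K$ set $\widetilde u=\widetilde B_j/\widetilde C_j$ wherever $\widetilde C_j\neq0$. By the first identity in \eqref{eq:cross}, these quotients agree on overlaps, so $\widetilde u$ is holomorphic on $V\setminus\widetilde K$ and $u=\widetilde u\circ p$ on $U\setminus K$.

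The main step is local boundedness of $\widetilde u$ near $\widetilde K$, and this is where the monic relation enters. Divide the second identity of \eqref{eq:cross} by $C_j^\ell$ where $C_j\ne0$. This gives
\[
 u^\ell+\sum_{k=0}^{\ell-1}A_ku^k=0\quad\text{on }X\setminus K,
\]
so after descent $\widetilde u^\ell+\sum_k\widetilde A_k\widetilde u^k=0$ on $V\setminus\widetilde K$. The coefficients $\widetilde A_k$ are holomorphic on $V$, hence bounded on a compact neighborhood of $p(x_0)$. The root bound $|w|\le1+\max_k|a_k|$ for monic polynomials then shows that $\widetilde u$ is bounded near $\widetilde K$. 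The Riemann extension theorem for bounded holomorphic functions across a proper analytic subset gives a holomorphic extension of $\widetilde u$ to $V$. (If one prefers to avoid analytic-set language, the complement is dense and one can use removability of the zero set of the single holomorphic function $\sum_j|\cdot|$-free choice $\widetilde C_1\neq0$ after a generic linear combination of the $\widetilde C_j$. In fact it suffices to remove the zero set of one nonzero holomorphic function containing $\widetilde K$, for instance $\widetilde C_j$ for some $j$ with $\widetilde C_j\not\equiv0$.) Pulling back by $p$ gives a smooth CR function on $U$ that agrees with $u$ on $U\setminus K$.

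Finally, the local extensions are glued, and the result is checked to be invariant. $K$ has empty interior, so $X\setminus K$ is dense. Any two local extensions are continuous and agree with $u$ on a dense subset of their overlap, so they agree on the whole overlap. This defines a single smooth function $U^\sharp$ on $X$, and it is CR. It is $G$-invariant because $g^*U^\sharp$ and $U^\sharp$ are continuous and coincide on the dense $G$-invariant set $X\setminus K$. Hence $U^\sharp\in\C$. Since $K\cap Y=\varnothing$, we get $\sigma_GU^\sharp=b$, which proves surjectivity.
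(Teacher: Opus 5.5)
Your proposal is correct and follows essentially the same route as the paper: descend the lifts to the local complex quotient of Lemma~\ref{lem:quotient}, use the monic relation and the root bound to get local boundedness of the descended quotient, extend it by bounded holomorphic removability, and then glue the local extensions and check invariance using the density of $X\setminus K$. Two minor points: emptiness of the interior of $\widetilde K$ follows from the continuity and surjectivity of $p$, not from its openness; and the garbled parenthetical can simply be replaced by removing the zero set of one $\widetilde C_j\not\equiv0$, which is exactly what the paper does.
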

\begin{proof}
It remains to extend \eqref{eq:quotient-u} across $K$.
On each local complex quotient, we show that its meromorphic 
representative satisfies a monic polynomial relation. 
This gives a local bound, allowing us to apply the removable-singularity theorem.
For $x\in K$,
Lemma~\ref{lem:quotient} gives a neighborhood $U$ and a connected
complex domain $p:U\to V$.  The functions in \eqref{eq:lifts}
descend to holomorphic functions
$\widetilde C_j,\widetilde B_j,\widetilde A_k$ on this same $V$.
Since $K$ has empty interior, some $\widetilde C_j$ is not 
identically zero.  Set $E=\{\widetilde C_j=0\}$ and
\[
 F_0=\widetilde B_j/\widetilde C_j\quad\text{on }V\setminus E.
\]
The set $E$ is a proper analytic subset.  Surjectivity of $p$ makes
the second identity of \eqref{eq:cross} 
an identity downstairs, so
\begin{equation}\label{eq:local-monic}
 F_0^\ell+\sum_{k=0}^{\ell-1}\widetilde A_kF_0^k=0.
\end{equation}
If $W\Subset V$ and $M\ge1$ bounds all the coefficients there, a
root of this equation satisfies $|F_0|\le1+M$.  Indeed, for
$R=|F_0|>1+M$ one would have
\[
 R^\ell\le M\sum_{k=0}^{\ell-1}R^k
 =M\frac{R^\ell-1}{R-1}<R^\ell.
\]
The bounds hold on $W\setminus E$, including up to $E$.  Bounded
holomorphic removability \cite[Lemma~2.3.22]{MaMarinescu} gives 
a unique $\widehat F_0\in\OO(V)$ extending $F_0$.  

For every $i$, the holomorphic function
$\widetilde C_i\widehat F_0-\widetilde B_i$ vanishes on
$V\setminus E$ by \eqref{eq:cross}, and hence on all of $V$.
Consequently the smooth CR pullback $u_U=p^*\widehat F_0$ satisfies
\begin{equation}\label{eq:all-numerators}
 C_i u_U=B_i\quad\text{on }U\quad\text{for every }i.
\end{equation}
\handrev{At any point of $U\setminus K$, 
at least one $C_i$ is nonzero; using
that index in \eqref{eq:all-numerators} proves $u_U=u$ throughout
$U\setminus K$.}

Repeat the construction near every point of $K$.  On an overlap $W$,
two local extensions agree on $W\setminus K$, which is dense by
Lemma~\ref{lem:empty}.  Continuity gives agreement on all of $W$.
They glue with $u$ to a global smooth CR function $\widehat u$.

Finally, for fixed $g\in G$, $g^*\widehat u-\widehat u$ is
continuous and zero on the dense invariant set $X\setminus K$.
It vanishes everywhere.  Thus $\widehat u\in\C$.
Since $Y\cap K=\varnothing$,
$\widehat u|_Y=\pi^*b$, so $\sigma_G\widehat u=b$.
\end{proof}

\subsection{Proof of Theorem~\ref{thm:main}: 
Completion on the Sobolev scale}\label{s:proofSob}
\handrev{In this subsection, we complete the proof of
Theorem~\ref{thm:main}.  We combine the surjectivity on smooth CR
functions proved above with the Fredholm statement in
Theorem~\ref{thm:HMM-input} to show that the
canonical map is a bounded isomorphism at every real Sobolev order,
with the shift $d/4$.  The resulting estimates also give continuity
of the inverse in the smooth Fr\'echet topology.  Finally, we use the
algebra isomorphism on smooth CR functions and the connectedness of
$X$ to prove that the reduction $X_G$ is connected.}
\begin{proof}[Proof of Theorem~\ref{thm:main}]
The range of $T_0=\sigma_{G,d/4}$ is closed by
Theorem~\ref{thm:HMM-input} and contains $\B$ by
Proposition~\ref{prop:surjective}.  
To verify density explicitly, let
$b\in\Hb(X_G)$ and take $f_\nu\in \handrev{\mathscr C}^\infty(X_G)$ 
with $f_\nu\to b$ in $L^2$.  Smoothness preservation and 
orthogonality of $S_{X_G}$ give
\[
 b_\nu=S_{X_G}f_\nu\in\B,\qquad
 \norm{b_\nu-b}_{X_G,0}
 \le\norm{f_\nu-b}_{X_G,0}\longrightarrow0.
\]
Hence $T_0$ is surjective.  It is injective by
Proposition~\ref{prop:injective}, so its index is zero.
The index is independent of $s$, and the kernel vanishes at every
order.  Thus $\dim\coker\sigma_{G,s}=0$ for every real $s$.
Fredholmness includes closed range, so each map is 
a bounded bijection
between Hilbert spaces.  The bounded inverse theorem gives
\eqref{eq:main-estimate}.

Compatibility \eqref{eq:compatibility} and injectivity 
of Proposition \ref{prop:injective} also identify the
inverse lifts at different orders.  Applied to a smooth target, 
the inverses have arbitrarily high Sobolev regularity 
and coincide with its smooth lift.  
The Sobolev estimates for arbitrarily high orders show
continuity of the smooth inverse in the Fr\'echet topology.  
The forward map is continuous on smooth functions 
by restriction and descent, or by its Sobolev estimates.  
Multiplicativity and bijectivity give the stated
Fr\'echet algebra isomorphism.

To prove connectedness, if $X_G$ were disconnected, 
choose the indicator
$b$ of one nonempty proper union of components.  It is a smooth CR
function satisfying $b^2=b$, and it is nonconstant.  Its lift $u\in\C$
satisfies $\sigma_G(u^2-u)=0$.  Injectivity yields $u^2=u$, so $u$
takes values in $\{0,1\}$.  Connectedness of $X$ implies that $u$ 
is constant, and then $b=\sigma_Gu$ is constant, a contradiction.
\handrev{This completes the proof of Theorem~\ref{thm:main}.}
\end{proof}

\begin{remark}[The finite-group case]\label{rem:finite}
If $\dim X=3$ and $\dim X_G\ge3$, \eqref{eq:dimension} gives $d=0$.
A compact zero-dimensional Lie group is finite, $Y=X$, and freeness
makes $\pi:X\to X/G$ a finite CR covering.  Invariant smooth or
Sobolev CR functions descend uniquely through this covering, with
Sobolev shift zero.  
This proves the assertion also when \(n=1\).
\end{remark}

\section{Sasakian and holomorphic 
quantization}\label{sec:applications}
The CR isomorphism has two quantization consequences. 
 Its equivariance under the Reeb torus gives the Sasakian result 
 in every weight, while
the circle-bundle correspondence relates it to canonical maps on
holomorphic sections.  

This section is organized as follows. 
In Section \ref{subsec:sasaki-proof},
we prove Corollary~\ref{cor:sasaki} as a consequence of Theorem 
\ref{thm:main}. In Section \ref{subsec:circle}, we explain the 
holomorphic reduction and the circle-bundle correspondence.
In Section \ref{subsec:powers},
\negrev{we prove Theorem~\ref{thm:line} independently of
Theorem~\ref{thm:main}.  A maximum-norm argument gives negative-degree
vanishing, while powers and holomorphic removability give the
nonnegative-degree isomorphisms, including zero-dimensional reduced
bases by stabilization.
In Section \ref{s4.4}, we compare with classical holomorphic and index results.}

\subsection{Proof of Corollary~\ref{cor:sasaki}: 
Sasakian weight spaces}\label{subsec:sasaki-proof}
\handrev{To obtain the weight-space isomorphism, we apply
Theorem~\ref{thm:main} to the Sasakian reduction and use the
equivariance of the canonical map under the Reeb torus.  Uniqueness
then ensures that a reduced CR function of weight $\alpha$ lifts to
an invariant CR function of the same weight.}

\begin{proof}[Proof of Corollary~\ref{cor:sasaki}]
\noterev{The identities $\omega_0(R)=1$ and
$\iota_Rd\omega_0=0$ uniquely determine the Reeb vector field, so
preservation of $\omega_0$ by $G$ implies preservation of $R$.
Consequently, $G$ commutes with the Reeb flow and its closure $\Tt$.
Since $\Tt$ preserves $\omega_0$ and the CR structure, for
$t\in\Tt$ and $\xi\in\mathfrak g$ we have}
\[
 \langle\mu(t\cdot x),\xi\rangle
 =\omega_{0,t\cdot x}\bigl(dt_x\xi_X(x)\bigr)
 =\langle\mu(x),\xi\rangle.
\]
Thus $\mu$ is $\Tt$-invariant and $\Tt$ preserves $Y=\mu^{-1}(0)$.
Infinitesimally, Cartan's formula and
$\mathcal L_{\xi_X}\omega_0=0$ give
\[
 d\langle\mu,\xi\rangle(R)=d\omega_0(R,\xi_X)=0,
\]
where the last equality is the Reeb identity
$\iota_Rd\omega_0=0$.  Hence $R$ is tangent to $Y$.
The torus descends to a CR action on $X_G$, and $R$ descends to
$\widetilde R$.  Pulling back under $\pi$ and using
$\pi^*\omega_{0,G}=\iota^*\omega_0$ gives
\[
 \omega_{0,G}(\widetilde R)=1,\qquad
 \iota_{\widetilde R}d\omega_{0,G}=0.
\]
Since $\widetilde R$ also preserves the reduced CR structure, $X_G$
is Sasakian; compare \cite[Remark~2.3 and the discussion preceding
(1.9)]{HMM}.

\handrev{By the embedding theorem of Marinescu--Yeganefar
\cite[Theorem~5.2]{MY07}, compact Sasakian manifolds of 
real dimension
at least three are CR embeddable.  Apply this to $X$ and to each
component of $X_G$, including when $\dim_{\RR}X_G=3$.
The equivalence between embeddability and closed-range recalled in
Section~\ref{sec:prelim} then gives closed range for their tangential
CR operators on functions.  Thus Theorem~\ref{thm:main} applies.\par}

\handrev{We also verify the regularity and finite-dimensionality 
of the weight spaces defined before Corollary~\ref{cor:sasaki}.
Let $\rho:\RR\to\Tt$ be the
\noterev{one-parameter subgroup defined by
$\rho(s)\cdot x=\Phi^R_{-s}(x)$, where $\Phi^R_s$ is the Reeb flow
at time $s$.  This choice agrees with the infinitesimal-action
convention in \eqref{eq:moment-convention}.  Define}
$\lambda_\alpha\in\RR$ by
$\alpha(\rho(s))=e^{i\lambda_\alpha s}$.
For $Z=X,X_G$, put $R_X=R$ and $R_{X_G}=\widetilde R$.
Define the differential operator
\[
 P_Z=\bar\partial_{b,Z}^{\,*}\bar\partial_{b,Z}-R_Z^2,
\]
where the adjoint is taken with respect to the Sasakian metric.
\noterev{For $u\in H^0_{b,\alpha}(Z)$, the convention above gives
$\rho(s)^*u=u\circ\Phi^{R_Z}_s=e^{i\lambda_\alpha s}u$.
Differentiating yields $R_Zu=i\lambda_\alpha u$}
distributionally.  Since $\bar\partial_{b,Z}u=0$, it follows that
\[
 P_Zu=\lambda_\alpha^2u.
\]
The operator $P_Z$ is elliptic: the principal symbol of the first
term controls horizontal covectors, while $-R_Z^2$ controls the
remaining direction in $TZ=HZ\oplus\RR R_Z$.
Elliptic regularity and compactness imply that its eigenspaces are
smooth and finite-dimensional, and hence so are the character spaces;
see also \cite[the discussion following (1.9)]{HMM}.\par}

\handrev{Canonical restriction intertwines the torus actions 
and the Reeb fields:}
\[
 \sigma_G R=\widetilde R\sigma_G
\]
on smooth invariant CR functions, as in \cite[(1.10)]{HMM}.
For $b\in H^0_{b,\alpha}(X_G)$, let $u\in\C$ be its unique
smooth lift.  For $t\in\Tt$, equivariance gives
$\sigma_G(t^*u-\alpha(t)u)=0$, so injectivity gives
$t^*u=\alpha(t)u$.  Therefore $u\in H^0_{b,\alpha}(X)^G$.
Injectivity on this space follows from global injectivity.
\end{proof}

\subsection{Holomorphic reduction and the circle-bundle 
correspondence}\label{subsec:circle}
\handrev{The unit circle bundle $X=S(L^*)$ identifies holomorphic
sections with fixed-weight CR functions and matches their canonical
restriction maps.}
The hypotheses and notation of Theorem~\ref{thm:line} remain in force;
\handrev{in particular, $P=\widetilde\mu^{-1}(0)$.}
The infinitesimal action in \eqref{eq:Kostant} is
\[
 \mathcal L_\xi s=\left.\frac{d}{dt}\right|_{0}
       \bigl(\exp(t\xi)\cdot s\bigr),\qquad
 (g\cdot s)(x)=g\bigl(s(g^{-1}x)\bigr).
\]
As in
\cite[(6.2)]{HMM},
\begin{equation}\label{eq:Kostant-derivative}
 d\langle\widetilde\mu,\xi\rangle
       =\iota_{\xi_M}\omega,\qquad \omega=\frac{i}{2\pi}R^L.
\end{equation}
\handrev{For comparison, invariance of $\omega_0$ and 
Cartan's formula give $d\langle\mu,\xi\rangle
=d(\iota_{\xi_X}\omega_0)=-\iota_{\xi_X}d\omega_0$.}
Near $P$, this is a K\"ahler form.  The local K\"ahler reduction
construction therefore supplies the holomorphic structures on $M_G$
and $L_G$.  Only a neighborhood of $P$ is needed. 
 The reduced curvature
is the restriction of $R^L$ to the reduced complex horizontal
directions; consequently $L_G$ is positive if $\dim_\CC M_G\ge1$.
\structrev{The reduced holomorphic structures on $M_G$ and $L_G$ and
the reduced curvature formula are those described before
\cite[Theorem~1.4]{HMM}.}

Let $p_L:X=S(L^*)\to M$ be the unit circle bundle. 
\handrev{The CR structure is
$T^{1,0}X=\CC TX\cap T^{1,0}L^*$. 
The unit circle $S^1$ acts by fiberwise multiplication 
on $L$, thus on $L^*$ as 
$(v,e^{i\theta})\mapsto ve^{-i\theta}$, 
with globally defined generator
$\partial_{\theta,v}=
\left.\frac{d}{d\theta}\right|_{\theta=0} ve^{i\theta}$ 
via (\ref{eq:moment-convention}).  }
The connection form is normalized as in \cite[(2.24)]{HMM}:
\begin{equation}\label{eq:circle-curvature}
 \omega_0(\partial_\theta)=1,\qquad
 d\omega_0=p_L^*(iR^L).
\end{equation}
The $G$-action on $L^*$ commutes with fiber rotation.  
With the precise
conventions \eqref{eq:moment-convention} and \eqref{eq:Kostant}, its
CR moment map is
\begin{equation}\label{eq:circle-moment}
 \mu=-2\pi\,p_L^*\widetilde\mu.
\end{equation}
The sign follows directly from the conventions.  For a
local section $s$ of $L$, write $\widehat s(v)=v(s(p_L(v)))$.
The definitions give $\xi_X\widehat s=\widehat{\mathcal L_\xi s}$.
On the other hand,
$\xi_X=(\xi_M)^H+\omega_0(\xi_X)\partial_\theta$ and
$\partial_\theta\widehat s=i\widehat s$.  Thus
$\mathcal L_\xi s=\nabla^L_{\xi_M}s+i\omega_0(\xi_X)s$.
Comparison with \eqref{eq:Kostant} yields \eqref{eq:circle-moment}.
In particular,
\begin{equation}\label{eq:circle-reduction}
 Y=p_L^{-1}(P),\qquad X_G=Y/G\simeq S(L_G^*).
\end{equation}
Levi positivity near $Y$ follows from \eqref{eq:circle-curvature};
freeness follows from freeness on $P$.

\handrev{For $m\in\mathbb Z$, evaluation is the canonical
 correspondence}
\begin{equation}\label{eq:section-evaluation}
 \begin{gathered}
 \widehat s(v)=v^{\otimes m}(s(p_L(v))),\\
 H^0(M,L^m)^G\simeq H^0_{b,m}(X)^G,\qquad
 H^0(M_G,L_G^m)\simeq H^0_{b,m}(X_G).
 \end{gathered}
\end{equation}
\handrev{For $m<0$, $v^{\otimes m}$ means
$(v^{-1})^{\otimes(-m)}$, where $v^{-1}\in L_{p_L(v)}$ is the unique
vector satisfying $v(v^{-1})=1$.}
Here $H^0_{b,m}$ denotes smooth CR functions satisfying
$f(v e^{i\theta})=e^{im\theta}f(v)$.  This is the correspondence in
\cite[(2.20)--(2.23)]{HMM}.  For example, in 
a local holomorphic frame
of metric $e^{-2\varphi}$, a scalar section coefficient $s(z)$ gives
$\widehat s(z,\theta)=e^{im\theta-m\varphi(z)}s(z)$.
The local BRT equations reduce exactly to $\bar\partial s=0$
\handrev{for every $m\in\mathbb Z$}.
This observation also shows that a fixed-weight distributional CR
function is a distributional holomorphic section and is therefore
smooth by elliptic regularity.

The canonical restriction in \eqref{eq:section-evaluation} 
is precisely
$A_m$, and it preserves multiplication across tensor degrees:
\begin{equation}\label{eq:graded-multiplication}
 A_{m+k}(s\otimes t)=A_m(s)\otimes A_k(t).
\end{equation}
\handrev{Thus multiplication is encoded in the graded section ring.}

\handrev{Assume now that $\dim_\CC M_G\ge1$.  
With the dual reduced metric, set
\begin{equation}\label{eq:reduced-disk}
 D_G=\{v\in L_G^*:|v|_{h^{L_G^*}}<1\}.
\end{equation}
The projection $p_{L_G}:X_G\to M_G$ is the unit circle bundle
projection, and $X_G=S(L_G^*)=\partial D_G$.  
The Levi form of $\partial D_G$ is $p_{L_G}^*(iR^{L_G})$, so 
positivity of $L_G$
makes $D_G$ a strictly pseudoconvex domain 
in the complex manifold $L_G^*$. 
Since $M_G$ is compact, $\overline{D_G}$ is a compact 
complex manifold
with smooth strictly pseudoconvex boundary $X_G$.  Kohn's theorem
\cite{Kohn86} therefore gives closed range for $\dbarG$ on functions.

For $n\ge2$, Theorem~\ref{thm:main} now applies to $X=S(L^*)$.
\handrev{Fiber rotation commutes with $G$ and preserves $Y$ by
\eqref{eq:circle-moment}.  The canonical map is therefore
$S^1$-equivariant, as is its inverse.  Its restriction to each integer
weight is the section map $A_m$ under \eqref{eq:section-evaluation}.
For $m<0$, positivity of $L_G$ implies
$H^0(M_G,L_G^m)=0$.  Indeed, near a positive maximum of the norm of a
nonzero holomorphic section $s$ one would have
$i\partial\bar\partial\log|s|^2_{h^{L_G^m}}
=-m\,iR^{L_G}>0$, a contradiction.  Hence the source
weight space vanishes as well, proving
\eqref{eq:negative-vanishing}.  The nonnegative weights prove
\eqref{eq:line-all}, \noterev{completing the circle-bundle proof of
Theorem~\ref{thm:line} in the case $n\ge2$ and $\dim_\CC M_G\ge1$.}
\negrev{The following subsection proves
Theorem~\ref{thm:line} in full, including negative-degree source
vanishing for zero-dimensional reduced bases.}}}

\subsection{An independent proof of Theorem \ref{thm:line}
 using large tensor powers}\label{subsec:powers}
\leavevmode\negrev{We prove Theorem~\ref{thm:line} without invoking
Theorem~\ref{thm:main}: negative-degree vanishing follows directly
from the Kostant identity and the maximum principle, and the
nonnegative-degree isomorphisms follow from
\cite[Theorem~1.4]{HMM}, which gives bijectivity for all sufficiently
large tensor powers.}
\begin{proof}[Proof of Theorem~\ref{thm:line}]
\negrev{\emph{Negative degrees.}
Let $m<0$ and suppose that $0\ne s\in H^0(M,L^m)^G$.
Compactness gives a maximum point $x$ of $|s|^2_{h^{L^m}}$, with
$s(x)\ne0$.  On the nonvanishing locus of $s$, put
$\varphi=\log|s|^2_{h^{L^m}}$.  Invariance and the tensor-power
version of \eqref{eq:Kostant} give, for every $\xi\in\mathfrak g$,
\[
 \nabla^{L^m}_{\xi_M}s
   =2\pi i m\langle\widetilde\mu,\xi\rangle s.
\]
Holomorphicity gives
$\nabla^{L^m}_{J\xi_M}s=i\nabla^{L^m}_{\xi_M}s$, and hence
\[
 d\varphi(J\xi_M)=-4\pi m\langle\widetilde\mu,\xi\rangle.
\]
At $x$ all these derivatives vanish.  Since $m\ne0$, it follows that
$\widetilde\mu(x)=0$, so $x\in P$.  The curvature identity near this
nonvanishing point is
\[
 i\partial\bar\partial\varphi=-m\,iR^L>0
 \quad\text{at }x,
\]
contradicting the nonpositive complex Hessian at a local maximum.
Thus $H^0(M,L^m)^G=0$ for every $m<0$, including when $M_G$ is
zero-dimensional.  If $\dim_\CC M_G\ge1$, the same maximum principle
applied to the positive bundle $L_G$ gives
$H^0(M_G,L_G^m)=0$.  This proves \eqref{eq:negative-vanishing} and
bijectivity of $A_m$ in negative degrees in that case.

\emph{Finite groups.}
If $d=0$, then $G$ is finite, $\widetilde\mu\equiv0$, and $P=M$.
The free action makes $M\to M_G$ a finite holomorphic covering, and
holomorphic descent identifies $H^0(M,L^m)^G$ with
$H^0(M_G,L_G^m)$ for every integer $m$.  The quotient $M_G$ is
connected, and positivity near $P=M$ is global.  In particular, this
handles $n=1$ with $\dim_\CC M_G\ge1$, since
$\dim_\CC M_G=n-d$ then forces $d=0$.}

\handrev{\negrev{We next prove the nonnegative-degree 
assertions when
$\dim_\CC M_G\ge1$ and $d\ge1$, so $n\ge2$.}  We invoke only
this positive-dimensional reduced-base case of
\cite[Theorem~1.4]{HMM}; its associated reduced circle bundle
 has real dimension $2\dim_\CC M_G+1\ge3$.  
 The hypotheses of that theorem hold: curvature positivity
is needed only near $P$, and $G$ acts freely there.  Thus there is
$m_0\ge1$ such that}
\begin{equation}\label{eq:eventual}
 A_m\text{ is bijective for every }m\ge m_0.
\end{equation}

\emph{Injectivity in every positive degree.}
Fix $m\ge1$ and suppose $A_m(s)=0$.  Take $k$ with $mk\ge m_0$.
Then \eqref{eq:graded-multiplication} gives $A_{mk}(s^k)=0$.
By \eqref{eq:eventual}, $s^k=0$, hence $s=0$ pointwise.

\emph{Surjectivity in every positive degree.}
Let $0\ne\tau\in H^0(M_G,L_G^m)$ and choose $k\ge1$ with
$mk\ge m_0$.  Lift $\tau^k$ and $\tau^{k+1}$ by
\eqref{eq:eventual} to invariant holomorphic sections
$s_k\in H^0(M,L^{mk})$ and
$s_{k+1}\in H^0(M,L^{m(k+1)})$.
Since their powers restrict identically, injectivity in the positive
degree $mk(k+1)$ yields
\begin{equation}\label{eq:powers-identity}
 s_{k+1}^{\,k}=s_k^{\,k+1}.
\end{equation}
The denominator $s_k$ is not identically zero.  
Its zero set is a proper
analytic subset of the connected complex manifold $M$.  On its
complement the meromorphic section
\begin{equation}\label{eq:consecutive-ratio}
 v=s_{k+1}/s_k\in\operatorname{Mer}(M,L^m)
 \qquad\text{satisfies}\qquad v^k=s_k.
\end{equation}
To check the extension locally, let $e$ be a holomorphic 
frame of $L^m$
and write $s_k=g e^k$, $s_{k+1}=h e^{k+1}$.  The scalar quotient
$f=h/g$ satisfies $f^k=g$.  It is locally bounded up to $\{g=0\}$,
so bounded holomorphic removability
\cite[Lemma~2.3.22]{MaMarinescu} extends it across that
set.  On overlaps the extensions satisfy the section transition law
because they do so on the dense complement of the zero set.  
They give
a global holomorphic section $v$ of $L^m$.  Its $G$-invariance also
extends from the dense invariant complement.  The identity $v^k=s_k$
holds everywhere.

On $P$ where $\tau\ne0$, the ratio restricts to $\tau$.  At any
point where $\tau=0$, $v^k|_P=\tau^k=0$ forces $v|_P=0$ in the
corresponding line fiber.  Thus $A_m(v)=\tau$ everywhere, even on a
component where $\tau$ vanishes identically.  
The zero section has the
\handrev{zero lift.  This proves surjectivity
 in every positive degree.}

\emph{Connectedness and degree zero.}
If $M_G$ had two components, positivity of $L_G$ would give nonzero
sections $\tau_1,\tau_2$ in the same sufficiently high tensor degree,
supported on distinct components.  
Choose the degree high enough to be
very ample on every component.  Their lifts $s_1,s_2$ are nonzero,
while $A_{2m}(s_1s_2)=\tau_1\tau_2=0$ would force $s_1s_2=0$.
Two nonzero holomorphic line-bundle sections on a connected complex
manifold cannot have identically zero product: on a nonempty open set
where the first is nonzero the second would vanish, hence vanish
identically.  This contradiction proves that $M_G$ is connected.
Compactness and connectedness now give
$H^0(M,\OO_M)=H^0(M_G,\OO_{M_G})=\CC$; the map $A_0$ is the identity
on constants.

\emph{Stabilization of the remaining cases.}
\negrev{It remains to prove the nonnegative-degree assertions when
$\dim_\CC M_G=0$.  Replace the data by}
\begin{equation}\label{eq:stabilization}
 M'=M\times\mathbb P^1,\qquad
 L'=L\boxtimes\OO_{\mathbb P^1}(1),
\end{equation}
with the product metric and the trivial $G$-action and lift on the
second factor.  The moment map is the pullback of
$\widetilde\mu$, and its zero set is $P\times\mathbb P^1$.
Regularity, freeness, and positivity near that set all persist; the
curvature is the sum of the two pulled-back curvatures.
The reduced objects are
\begin{equation}\label{eq:stabilized-reduction}
 M'_G=M_G\times\mathbb P^1,\qquad
 L'_G=L_G\boxtimes\OO_{\mathbb P^1}(1).
\end{equation}
Since $\dim_\CC M'\ge2$ and $\dim_\CC M'_G\ge1$, the case
already proved applies, including degree zero and connectedness.
For every $m\ge0$, the K\"unneth formula for global holomorphic
sections identifies the restriction map for the stabilized data with
\begin{equation}\label{eq:tensor-map}
 A_m\otimes\operatorname{id}_{V_m},\qquad
 V_m=H^0(\mathbb P^1,\OO(m)),\qquad \dim V_m=m+1.
\end{equation}
The $G$-action on $V_m$ is trivial.  
Since $V_m$ is nonzero, bijectivity
of \eqref{eq:tensor-map} implies bijectivity of $A_m$, for example by
choosing a basis of $V_m$.  Connectedness of $M'_G$ implies
connectedness of $M_G$ by its surjective projection.  
This completes the proof for all cases in the statement.

Finally \eqref{eq:graded-multiplication} proves that the isomorphisms
assemble into an isomorphism of graded section rings.
\handrev{This completes the proof of Theorem~\ref{thm:line}.}
\end{proof}

\begin{remark}[Stabilization and the one-dimensional obstruction]
\label{rem:stabilization}
\handrev{The proof of \eqref{eq:stabilization} 
applies \cite{HMM} to the
stabilized circle bundle, whose reduced real dimension is at least
three, and recovers the original section maps by tensor cancellation.
This stabilization argument applies only to nonnegative degrees, 
since $V_m=H^0(\mathbb P^1,\OO(m))=0$ for $m<0$.
\negrev{The maximum-norm argument proves negative-degree source
vanishing directly, in every reduced dimension.  Positive reduced
dimension is needed for target-side vanishing and hence for the
negative-degree isomorphisms.}}

\negrev{To see that this restriction is necessary, take
$M=\mathbb P^1$, $L=\OO(2)$ with the Fubini--Study metric, and
$t\cdot[z:w]=[tz:w]$ for $t\in S^1$, twisting the natural lift to $L$
by the character $t$.  
With $\exp(t\xi)=e^{it}$, its moment map is
\[
 \langle\widetilde\mu([z:w]),\xi\rangle
 =\frac1{2\pi}\left(\frac{2|z|^2}{|z|^2+|w|^2}-1\right).
\]
The zero level $P=\{|z|=|w|\}$ is regular and consists of one free
circle orbit, so $M_G$ is a point.  The invariant section $zw$ is
nonvanishing on $P$ and trivializes the descended line $L_G$.
For every $m<0$,
\[
 H^0(\mathbb P^1,\OO(2m))^{S^1}=0,
 \qquad H^0(M_G,L_G^m)\simeq\CC,
\]
so $A_m$ is not surjective. In contrast, for $m\ge0$ the invariant
space is $\CC(zw)^m$ and its canonical descent map is an isomorphism.}
\end{remark}

\subsection{Comparison with classical holomorphic and index results}\label{s4.4}
When $L$ is positive globally and $G$ is connected,
Theorem~\ref{thm:line} recovers the canonical holomorphic quantization
isomorphism of Guillemin--Sternberg \cite{GS82a}
 but not the higher degree
or singular versions of \cite{Zhang99}.  The independent proof above
starts from bijectivity for all
sufficiently large tensor powers in \cite[Theorem~1.4]{HMM}
and obtains local boundedness from the polynomial identity in
\eqref{eq:consecutive-ratio}. 

The theorem of Tian--Zhang \cite[Theorem~0.1]{TZ98} concerns equality
of virtual Spin$^c$ indices for a positive prequantum line bundle.
Applying it to $(L^m,m\omega,m\widetilde\mu)$ gives index identities
in positive tensor degrees.  In the K\"ahler setting, 
vanishing theorems in high tensor degree then recover 
high-degree dimensions of holomorphic sections.
\structrev{An index identity alone does not identify the canonical
$H^0$ map in small degree}: positivity of $L$ does not in
general force every higher cohomology group of $L$ itself to vanish.
The all-degree assertion of \cite{Zhang99} concerns integrable 
holomorphic geometry. 
It seems that \cite[Theorem~6.1]{HMM} does not hold for all
$m\in\mathbb{N}^*$.

\section{Explicit examples}\label{sec:examples}
In Section \ref{subsec:weak-example} we consider an example showing 
how Theorem~\ref{thm:main} applies
when the Levi form degenerates away from the zero level: the
reduction is $S^3$, and smooth CR functions have explicit invariant
lifts.  

In Sections \ref{S:logtube} and \ref{S:projrest} we consider 
the case when $X_G$ has real dimension one. Here the intrinsic CR equation imposes no
condition on functions, and three target-map pairs must 
be distinguished. For the intrinsic target $\mathscr C^\infty(X_G)$,
 restriction and descent have infinite-codimensional range, 
 consisting of smooth paired boundary traces of holomorphic
  functions on an annulus.  For the componentwise
disk-Hardy target of \cite[after (1.2)]{HMM}, the same map does 
not take values in the proposed space.  Fourier projection gives
 a map into that Hardy target, but it is nonmultiplicative and 
 has one-dimensional smooth cokernel.  
 The logarithmic tube in Section \ref{S:logtube} proves these claims and shows why
the dimension hypothesis in Theorem~\ref{thm:main} cannot in general
be omitted.

\subsection{A weakly pseudoconvex hypersurface with 
spherical reduction}\label{subsec:weak-example}
We revisit \cite[\S~2.5, Example~(1)]{HMM} to check the local Levi
hypothesis and to construct a lift explicitly.  The hypersurface and
circle action are
\begin{equation}\label{eq:weak-X}
 \begin{gathered}
 X=\{(z_1,z_2,z_3)\in\CC^3:|z_1|^4+|z_2|^2+|z_3|^2=1\},\\
 e^{i\theta}\cdot z=(e^{-i\theta}z_1,e^{i\theta}z_2,e^{i\theta}z_3).
 \end{gathered}
\end{equation}
Take $\rho=|z_1|^4+|z_2|^2+|z_3|^2-1$ and
$\omega_0=d^c\rho|_X$, where $d^c\rho=-d\rho\circ J$.
The complex Hessian is $\operatorname{diag}(4|z_1|^2,1,1)$.
Thus \(X\) is weakly pseudoconvex, with a degenerate 
Levi direction along \(\{z_1=0\}\).
With the negative fundamental-vector-field
convention \eqref{eq:moment-convention},
\begin{equation}\label{eq:weak-moment}
 \mu=4|z_1|^4-2|z_2|^2-2|z_3|^2,\qquad
 Y=\{|z_1|^4=1/3,\ |z_2|^2+|z_3|^2=2/3\}.
\end{equation}
The $S^1$-action is free on $Y$ and the Levi form is positive nearby. 
 The invariant ambient holomorphic map
\begin{equation}\label{eq:weak-Q}
 Q(z)=\frac{3^{3/4}}{\sqrt2}(z_1z_2,z_1z_3)
\end{equation}
maps $Y$ to $S^3$.  Every orbit in $Y$ has a unique representative
with $z_1=3^{-1/4}>0$, so $Q$ induces 
a diffeomorphism $X_G\simeq S^3$.
Its holomorphicity identifies the reduced CR structure with the
standard one.  The reduction is therefore embeddable and has real
dimension three.

Smooth surjectivity can also be verified explicitly.  Put
$a=|z_1|^4\in[0,1]$.
On $X$,
\begin{equation}\label{eq:weak-bound}
 |Q|^2=\frac{3^{3/2}}2\sqrt a(1-a)\le1,
 \qquad |Q|=1\ \Longleftrightarrow\ a=1/3.
\end{equation}
Indeed the derivative of $\sqrt a(1-a)$ vanishes at $a=1/3$ and
this is its maximum.  Thus $Q(X)\subset\overline{\mathbb B^2}$ and
$Q^{-1}(S^3)=Y$.
For $b\in \handrev{\mathscr C}^\infty(S^3)
\cap\Ker\bar\partial_{b,S^3}$, the
classical ball extension gives
$F\in\OO(\mathbb B^2)\cap \handrev{\mathscr C}^\infty(\overline{\mathbb B^2})$
with boundary value $b$.  \handrev{For this sphere, one can see the
extension directly: the circle Fourier components of a smooth CR
function are restrictions of homogeneous holomorphic polynomials;
their rapid decay in every smooth norm makes the corresponding
polynomial series converge smoothly on the closed ball.}
\handrev{After extending $F$ smoothly past $\overline{\mathbb B^2}$
by the Seeley extension theorem,} the function
\begin{equation}\label{eq:weak-explicit-lift}
 u=F\circ Q|_X
\end{equation}
is smooth and invariant.  It is CR on $X\setminus Y$, where the
composition is holomorphic in a neighborhood, and is CR on all of $X$
by continuity of its tangential derivatives.  The complement is dense
since $Y$ is a regular hypersurface in $X$.  On $Y$ the function
descends to $b$.  This directly verifies smooth surjectivity for this
example; Theorem~\ref{thm:main} supplies uniqueness and the Sobolev
isomorphisms 
\[
 \sigma_{S^1,s}:\Hb(X)^{S^1}_s\overset{\sim}{\longrightarrow}
                          \Hb(S^3)_{s-1/4}.
\]
For the peak-function proof, the functions
\[
 f_p(w)=\frac{1+w_1\overline p_1+w_2\overline p_2}{2},
 \qquad p\in S^3,
\]
peak at $p$ by the equality conditions in the triangle and
Cauchy--Schwarz inequalities.  
Their invariant lifts are $f_p\circ Q$.

\subsection{A logarithmic tube and its reduction}\label{S:logtube}
 To test whether canonical restriction can take values in a
 componentwise disk Hardy space in real dimension one, consider a
 logarithmic tube whose reduction has two circle components.  
Invariant CR monomials will show that restriction and descent
do not take values in the chosen Hardy target.
Let $N\ge2$, write $z_j=e^{x_j+i\theta_j}$ on $(\CC^*)^{N+1}$,
and put
\begin{equation}\label{eq:tube-data}
 \rho=\sum_{j=1}^{N+1}x_j^2-1,\qquad
 D=\{\rho<0\},\quad X=\partial D,\quad
 \omega_0=d^c\rho|_X=2\sum_{j=1}^{N+1}x_jd\theta_j.
\end{equation}
The Hessian is
\[
 \frac{\partial^2\rho}{\partial z_j\partial\bar z_k}
       =\frac{\delta_{jk}}{2|z_j|^2}.
\]
The boundary is therefore strictly pseudoconvex.  It is smooth,
compact, and connected, with
$X\simeq S^N\times(S^1)^{N+1}$ and $\dim_{\RR}X=2N+1\ge5$.
Let $G=(S^1)^N$ rotate the first $N$ coordinates.  The action is free
and preserves $\omega_0$ and the CR structure.  
Its moment map is
\[
 \mu=-2(x_1,\ldots,x_N).
\]
Zero is a regular value, and
\begin{equation}\label{eq:tube-Y}
 Y=\{x_1=\cdots=x_N=0,\ x_{N+1}=\pm1\},\qquad
 X_G=Z_+\sqcup Z_-\simeq S^1\sqcup S^1.
\end{equation}
The induced forms on these two components are
$\pm2d\theta_{N+1}$.  Define
$t_+=\theta_{N+1}$ and $t_-=-\theta_{N+1}$, and let
$\mathscr H_\pm^\infty$ be the smooth nonnegative-mode disk Hardy
spaces in these coordinates.

Let $\mathsf R$ denote restriction to $Y$ followed by descent:
\[
 \mathsf R:\mathscr C^\infty(X)^G\cap\Ker\dbarb
       \longrightarrow\mathscr C^\infty(X_G),\qquad
 (\mathsf R u)([x])=u(x)\quad(x\in Y).
\]
Every $u_k=z_{N+1}^k|_X$, $k\in\mathbb Z$, is a smooth invariant CR
function.  The formula for $\mathsf R$ gives
\begin{equation}\label{eq:tube-restriction}
 \mathsf R(u_k)=\bigl(e^ke^{ikt_+},e^{-k}e^{-ikt_-}\bigr).
\end{equation}
\handrev{The full range into the intrinsic smooth target can also be
identified.  Set $A=\{e^{-1}<|\zeta|<e\}$.  Then
\begin{equation}\label{eq:annulus-range}
 \Ran\mathsf R
 =\bigl\{\bigl(F(e e^{it_+}),F(e^{-1}e^{-it_-})\bigr):
     F\in\OO(A)\cap\mathscr C^\infty(\overline A)\bigr\},
\end{equation}
where smoothness on $\overline A$ means smooth extendibility past its
two boundary circles.  Indeed, expand an invariant 
smooth CR function as
$u=\sum_{k\in\mathbb Z}a_k(x)e^{ik\theta_{N+1}}$, with $x\in S^N$.
Invariance under $G$ makes $u$ independent of
$\theta_1,\ldots,\theta_N$.  For every $v\in T_xS^N$, the tangential
CR vector $L_v=\sum_j v_j(\partial_{x_j}+i\partial_{\theta_j})$
therefore gives
\[
 0=L_vu=\sum_{k\in\mathbb Z}
 \bigl(v\cdot\nabla a_k-kv_{N+1}a_k\bigr)e^{ik\theta_{N+1}},
\]
where $v\cdot\nabla a_k$ denotes the directional derivative of $a_k$
along $v$ on the sphere.  Comparing Fourier coefficients yields
$v\cdot\nabla a_k=kv_{N+1}a_k$, and hence
\(v\cdot\nabla\bigl(e^{-kx_{N+1}}a_k(x)\bigr)=0\).
Since this holds for every tangent vector $v$, connectedness of $S^N$
gives $e^{-kx_{N+1}}a_k(x)=c_k$ for a constant $c_k$.
Smoothness at the two pole circles makes $e^{|k|}c_k$ rapidly
decreasing, so $F(\zeta)=\sum_k c_k\zeta^k$ belongs to the class in
\eqref{eq:annulus-range}; conversely such an $F$ gives a smooth
invariant CR function on $X$.  In the coordinates $t_+,t_-$, every
$(f_+,f_-)\in\Ran\mathsf R$ therefore satisfies
\begin{equation}\label{eq:annulus-fourier}
 \widehat f_-(-k)=e^{-2k}\widehat f_+(k),\qquad k\in\mathbb Z.
\end{equation}
These are infinitely many independent linear constraints on
$\mathscr C^\infty(Z_+)\oplus\mathscr C^\infty(Z_-)$, so
$\Ran\mathsf R$ has infinite codimension.}
For $k>0$ the second component has negative degree; for $k<0$ the
first does.  Thus $\mathsf R$ does not map the invariant CR algebra
into $\mathscr H_+^\infty\oplus\mathscr H_-^\infty$.
Changing orientation alone does not restore this inclusion: on each
component the restrictions of $z_{N+1}$ and $z_{N+1}^{-1}$ are mutual
inverses with nonconstant phase and constant modulus.  Suppose that,
in some smooth parametrization, both restrictions were 
boundary values of holomorphic functions on the disk.  
Their product has boundary value $1$, hence the product of 
the two holomorphic extensions is identically
$1$ by uniqueness of the holomorphic extension.  Each extension is
therefore nonvanishing.  Since its boundary modulus is constant,
$\log|f|$ is harmonic with constant boundary value; thus $|f|$ is
constant on the disk, forcing $f$ itself to be constant, a
contradiction.

\subsection{Projected restriction and its cokernel}\label{S:projrest}
 Since $\mathsf R$ does not take values in the selected Hardy space,
 one may project each
 component onto its nonnegative Fourier modes.  This repairs the
 codomain but sacrifices multiplicativity; the smooth range consists
 precisely of pairs with equal constant Fourier coefficients.
Let $P_\pm$ be the nonnegative Fourier projections and
$\mathsf Q=(P_+\oplus P_-)\mathsf R$.  It is a map to the selected
Hardy target, but by \eqref{eq:tube-restriction},
\[
 \mathsf Q(u_1)=(ee^{it_+},0),\qquad
 \mathsf Q(u_{-1})=(0,ee^{it_-}),\qquad
 \mathsf Q(u_1u_{-1})=(1,1).
\]
Thus it is not multiplicative.  More precisely its smooth range is
\begin{equation}\label{eq:tube-range}
 \Ran\mathsf Q
 =\{(f_+,f_-)\in\mathscr H_+^\infty\oplus\mathscr H_-^\infty:
                      \widehat f_+(0)=\widehat f_-(0)\}.
\end{equation}
\handrev{To prove necessity, average an invariant CR function 
$u$ over rotation of the last coordinate.  
This extracts its zero Fourier mode,
which is constant on $S^N$ by the calculation above with $k=0$.
Its values at the two poles are the constant Fourier coefficients in
\eqref{eq:tube-range}, and projection preserves them.}

Conversely, write a pair with equal constant coefficients as
\[
 f_+=a_0+\sum_{k\ge1}a_ke^{ikt_+},\qquad
 f_-=a_0+\sum_{k\ge1}b_ke^{ikt_-}.
\]
The rapidly decreasing coefficients define the smooth invariant CR
function
\[
 u=a_0+\sum_{k\ge1}e^{-k}
            (a_kz_{N+1}^k+b_kz_{N+1}^{-k})\big|_X.
\]
All derivatives converge uniformly since
$e^{-1}\le|z_{N+1}|\le e$ and each differentiated normalized 
monomial has only polynomial growth in $k$.  Equation
\eqref{eq:tube-restriction} gives $\mathsf Q(u)=(f_+,f_-)$.
This proves \eqref{eq:tube-range}; its smooth cokernel has dimension
one, with missing data represented by the difference of the constants.

\leavevmode\structrev{The residual complex domain is the annulus
$e^{-1}<|\zeta|<e$, so its boundary values on the two circles are
linked rather than arbitrary disk boundary values.}
For $k>0$,
\[
 \mathsf R(e^{-k}u_k)
     =\bigl(e^{ikt_+},e^{-2k}e^{-ikt_-}\bigr).
\]
The negative-frequency component can be exponentially small 
but nonzero. Consequently a local Szeg\H{o} formula or 
an equality modulo smoothing
does not establish the exact global range inclusion required by a
multiplicative restriction map.

\begin{remark}\label{rem:dim-one}
The above example shows that the case where $\dim_{\RR}X_G=1$ must 
be excluded from \cite[Theorem~1.2]{HMM}. The
inclusion 
$\sigma_G\big(H^0_b(X)^G\cap\mathscr C^\infty(X)\big)\subset H^0_b(X_G)$
asserted in \cite[p.~4]{HMM} follows from the CR reduction \cite[(2.43)]{HMM}
when $\dim_{\RR}X_G\ge3$. 
However, when $\dim_{\RR}X_G=1$, we have $HX_G=0$. and 
our definition after \cite[(1.4)]{HMM} of $H^0_b(X_G)$ to be the Hardy space 
is not appropriate, since
the Hardy condition on Fourier coefficients is not implied for the image of $\sigma_G$.
%
For $\dim_{\RR}X_G\ge3$, the only case used here, \cite[Theorem~1.2]{HMM} is
unaffected.
\end{remark}

\section*{Acknowledgements}
Chin-Yu Hsiao was partially supported by National Science and 
Technology Council project 113-2115-M-002-011-MY3.

Rung-Tzung Huang was partially supported by National Science 
and Technology Council project 113-2115-M-008-009-MY3.

Xiaonan Ma was partially supported by Nankai Zhide Foundation,
\handrev{NSFC No.~12621005, Tianjin Science and Technology Program
(No.\ 25ZYZYJC00010)}, and funded through the Institutional 
Strategy of the
University of Cologne within the German Excellence Initiative.

George Marinescu was partially supported by the DFG-funded project
SFB TRR 191.

\smallskip
\noindent
{\it Statement of usage of artificial intelligence.}
The central mathematical ideas and overall approach are 
due to the authors. 
OpenAI ChatGPT was used to assist with proofreading and 
improving the clarity of the exposition.
The authors take full responsibility for the paper’s
content and the accuracy of its references.


\begin{thebibliography}{99}

\bibitem{AG62}
A. Andreotti and H. Grauert,
\emph{Th\'eor\`emes de finitude pour la cohomologie des espaces
 complexes},
Bull. Soc. Math. France \textbf{90} (1962), 193--259.

\bibitem{Atiyah82}
\structrev{M. F. Atiyah,
\emph{Convexity and commuting Hamiltonians},
Bull. London Math. Soc. \textbf{14} (1982), no.~1, 1--15.}

\bibitem{BRT}
M.-S. Baouendi, L.-P. Rothschild, and F. Tr\`eves,
\emph{CR structures with group action and extendability 
of CR functions},
Invent. Math. \textbf{82} (1985), no.~2, 359--396.

\bibitem{BdM75}
L. Boutet de Monvel,
\emph{Int\'egration des \'equations de Cauchy--Riemann induites 
formelles},
S\'eminaire Goulaouic--Lions--Schwartz 1974--1975,
Exp. no.~9, 14 pp., Centre de Math\'ematiques, \'Ecole Polytechnique,
Paris, 1975.

\bibitem{BG}
L. Boutet de Monvel and V. Guillemin,
\emph{The Spectral Theory of Toeplitz Operators},
Annals of Mathematics Studies, vol.~99,
Princeton University Press, Princeton, NJ, 1981.

\bibitem{BMS}
L. Boutet de Monvel and J. Sj\"ostrand,
\emph{Sur la singularit\'e des noyaux de Bergman et de Szeg\H{o}},
Ast\'erisque \textbf{34--35} (1976), 123--164.

\bibitem{Braverman01}
M. Braverman,
\emph{Cohomology of the Mumford quotient},
in \emph{Quantization of Singular Symplectic Quotients},
Progress in Mathematics, vol.~198, Birkh\"auser, Basel, 2001,
47--59.

\bibitem{Bur79} D.\ M.\ Burns, 
\emph{Global behavior of some tangential {C}auchy-{R}iemann  equations}, 
Partial differential equations and geometry (Proc. Conf., Park
  City, Utah, 1977), Lecture Notes in Pure and Appl. Math., vol.~48, 
Dekker, New York, 1979, 51--56.

\bibitem{GS82convexity}
\structrev{V. Guillemin and S. Sternberg,
\emph{Convexity properties of the moment mapping},
Invent. Math. \textbf{67} (1982), no.~3, 491--513.}

\bibitem{GS82a}
V. Guillemin and S. Sternberg,
\emph{Geometric quantization and multiplicities of
 group representations},
Invent. Math. \textbf{67} (1982), 515--538.

\bibitem{HL75}
F. R. Harvey and H. B. Lawson, Jr.,
\emph{On boundaries of complex analytic varieties. I},
Ann. of Math. (2) \textbf{102} (1975), 223--290.

\bibitem{HLadd}
F. R. Harvey and H. B. Lawson, Jr.,
\emph{Addendum to Theorem 10.4 in ``Boundaries of Analytic Varieties''},
2000, arXiv:math/0002195.  This addendum contains the amended
Theorem~10.4$'$ used here.

\bibitem{HHMS}
H. Herrmann, C.-Y. Hsiao, G. Marinescu and W.-C. Shen, 
\emph{Semi-classical spectral asymptotics of Toeplitz operators on CR manifolds},
arXiv:2303.17319v2.


\bibitem{HillNacinovich}
C. D. Hill and M. Nacinovich,
\emph{Stein fillability and the realization of contact manifolds},
Proc. Amer. Math. Soc. \textbf{133} (2005), no.~6, 1843--1850.


\bibitem{HochsSong17}
P. Hochs and Y. Song,
\emph{Equivariant indices of Spin$^c$-Dirac operators for 
proper moment maps},
Duke Math. J. \textbf{166} (2017), no.~6, 1125--1178.

\bibitem{HHtorsion}
C.-Y. Hsiao and R.-T. Huang,
\emph{The asymptotics of the analytic torsion on CR manifolds with
$S^1$ action},
Commun. Contemp. Math. \textbf{21} (2019), no.~4, 1750094, 35 pp.


\bibitem{HH}
C.-Y. Hsiao and R.-T. Huang,
\emph{$G$-invariant Szeg\H{o} kernel asymptotics and CR reduction},
Calc. Var. Partial Differential Equations \textbf{60} (2021), 
Paper No.~47, 48 pp.

\bibitem{HMM}
C.-Y. Hsiao, X. Ma, and G. Marinescu,
\emph{Geometric quantization on CR manifolds},
Commun. Contemp. Math. \textbf{25} (2023), no.~10, 2250074, 73 pp.

\bibitem{HM} C.-Y. Hsiao and G. Marinescu, 
\emph{On the singularities of the Szeg\H{o} projections on 
lower energy forms}, J. Diff. Geom. \textbf{107} (2016), 83-155.

\bibitem{Kirwan84}
\structrev{F. C. Kirwan,
\emph{Cohomology of Quotients in Symplectic and Algebraic Geometry},
Mathematical Notes, vol.~31,
Princeton University Press, Princeton, NJ, 1984.}

\bibitem{Kohn86}
J. J. Kohn,
\emph{The range of the tangential Cauchy--Riemann operator},
Duke Math. J. \textbf{53} (1986), 525--545.

\bibitem{MaICM10}
X. Ma,
\emph{Geometric quantization on K\"ahler and symplectic manifolds},
in \emph{Proceedings of the International Congress of Mathematicians}
(Hyderabad, 2010), vol.~II, Hindustan Book Agency, New Delhi, 2010,
pp.~785--810.

\bibitem{MaMarinescu}
X. Ma and G. Marinescu,
\emph{Holomorphic Morse Inequalities and Bergman Kernels},
Progress in Mathematics, vol.~254, Birkh\"auser, Basel, 2007.

\bibitem{MZ08}
X. Ma and W. Zhang,
\emph{Bergman kernels and symplectic reduction},
Ast\'erisque \textbf{318} (2008), 154 pp.


\bibitem{MaZhang09}
X. Ma and W. Zhang,
\emph{Geometric quantization for proper moment maps},
C. R. Math. Acad. Sci. Paris \textbf{347} (2009), no.~7--8, 389--394.

\bibitem{MZ14}
X. Ma and W. Zhang,
\emph{Geometric quantization for proper moment maps: the Vergne
conjecture},
Acta Math. \textbf{212} (2014), 11--57.

\bibitem{MY07}
\handrev{G. Marinescu and N. Yeganefar,
\emph{Embeddability of some strongly pseudoconvex CR manifolds},
Trans. Amer. Math. Soc. \textbf{359} (2007), no.~10, 4757--4771.}

\bibitem{Meinrenken96}
E. Meinrenken,
\emph{On Riemann--Roch formulas for multiplicities},
J. Amer. Math. Soc. \textbf{9} (1996), 373--389.

\bibitem{Meinrenken98}
E. Meinrenken,
\emph{Symplectic surgery and the Spin$^c$-Dirac operator},
Adv. Math. \textbf{134} (1998), 240--277.

\bibitem{MeinrenkenSjamaar99}
E. Meinrenken and R. Sjamaar,
\emph{Singular reduction and quantization},
Topology \textbf{38} (1999), no.~4, 699--762.

\bibitem{NN}
A. Newlander and L. Nirenberg,
\emph{Complex analytic coordinates in almost complex manifolds},
Ann. of Math. (2) \textbf{65} (1957), 391--404.

\bibitem{Paradan01}
P.-E. Paradan,
\emph{Localization of the Riemann--Roch character},
J. Funct. Anal. \textbf{187} (2001), no.~2, 442--509.


\bibitem{Paradan11}
P.-E. Paradan,
\emph{Formal geometric quantization II},
Pacific J. Math. \textbf{253} (2011), 169--211.

\bibitem{ParadanVergne17}
\handrev{P.-E. Paradan and M. Vergne,
\emph{Equivariant Dirac operators and differentiable geometric 
invariant theory},
Acta Math. \textbf{218} (2017), no.~1, 137--199.}

\bibitem{Starcic}
T. Star\v{c}i\v{c},
\emph{Stein neighborhood bases of embedded strongly pseudoconvex
 domains and approximation of mappings},
J. Geom. Anal. \textbf{18} (2008), no.~4, 1133--1158.

\bibitem{Teleman00}
C. Teleman,
\emph{The quantization conjecture revisited},
Ann. of Math. (2) \textbf{152} (2000), no.~1, 1--43.

\bibitem{TZ98}
Y. Tian and W. Zhang,
\emph{An analytic proof of the geometric quantization conjecture of
Guillemin--Sternberg},
Invent. Math. \textbf{132} (1998), 229--259.

\bibitem{Vergne96}
M. Vergne,
\emph{Multiplicities formula for geometric quantization. I, II},
Duke Math. J. \textbf{82} (1996), 143--179, 181--194.

\bibitem{VergneICM06}
M. Vergne,
\emph{Applications of equivariant cohomology},
in \emph{Proceedings of the International Congress of Mathematicians}
(Madrid, 2006), vol.~I, European Mathematical Society, Z\"urich, 2007,
pp.~635--664.

\bibitem{Zhang99}
W. Zhang,
\emph{Holomorphic quantization formula in singular reduction},
Commun. Contemp. Math. \textbf{1} (1999), no.~3, 281--293.

\end{thebibliography}
\end{document}